\definecolor{green1}{rgb}{0.0, 0.5, 0.0}
\newtheorem{definition}{Definition}[section]
\newtheorem{thm}[definition]{Theorem}
\newtheorem{theorem}[definition]{Theorem}
\newtheorem{lemma}[definition]{Lemma}
\newtheorem{prop}[definition]{Proposition}
\newtheorem{cor}[definition]{Corollary}
\newtheorem{defn}[definition]{Definition}
\theoremstyle{remark}
\newtheorem{example}[definition]{Example}
 \newtheorem{rem}[definition]{Remark}
\newcommand{\B}{\mathbb{B}}
\newcommand{\R}{\mathbb{R}}
\newcommand{\N}{\mathbb{N}}
\newcommand{\T}{\mathbb{T}}
\newcommand{\G}{{\Gamma}}
\newcommand{\cX}{\mathcal{X}}
\newcommand{\cB}{\mathcal{B}}
\newcommand{\cC}{\mathcal{C}}
\newcommand{\cJ}{\mathcal{J}}
\newcommand{\cD}{\mathcal{D}}
\newcommand{\cF}{\mathcal{F}}
\newcommand{\cH}{\mathcal{H}}
\newcommand{\cI}{\mathcal{I}}
\newcommand{\cL}{\mathcal{L}}
\newcommand{\cS}{\mathcal{S}}
\newcommand{\cU}{\mathcal{U}}
\newcommand{\cY}{\mathcal{Y}}
\newcommand{\wt}{\widetilde}
\newcommand{\koo}{{k\to\infty}}
\newcommand{\wto}{\rightharpoonup}
\newcommand{\weaks}{\stackrel{\star}{\wto}}
\newcommand{\cA}{\mathcal{A}}
\newcommand{\hp}{p}
\newcommand{\hx}{x}
\newcommand{\hu}{u}
\newcommand{\loc}{{\rm loc}}
\newcommand{\dis}{\displaystyle}
\newcommand{\Wl}{W_{\rm loc}}
\newcommand{\seq}{{\rm seq}}
\DeclarePairedDelimiterX{\inp}[2]{\langle}{\rangle}{#1, #2}
\newcommand{\AI}{{\cA}}
\newcounter{altassumption}[assumption]
\renewcommand{\thealtassumption}{\theassumption w}
\author{Matteo Della Rossa}
\author{Lorenzo Freddi} 
\address[ Matteo Della Rossa, Lorenzo Freddi]{Dipartimento di Scienze Matematiche, Informatiche e Fisiche,  Universit\`a di Udine, via delle Scienze 206, 33100 Udine, Italy}
\email{matteo.dellarossa@uniud.it, lorenzo.freddi@uniud.it}
\title[Pattern-preserving optimal control problems]{Pattern-preserving optimal control problems \\ with increasing time horizon}
\begin{document}

\maketitle

\begin{abstract}
 We establish a general  framework that guarantees the preservation of optimal control patterns as the time horizon $[0,T]$ increases and becomes unbounded.
A concept of pattern-preserving family of optimal control problems is introduced and 
the goal is achieved by analyzing the $\Gamma$-convergence of the corresponding variational formulations  as $T\to\infty$.  Special attention  is given to scenarios involving state constraints. To illustrate the results, examples and applications are provided, with particular focus on switched systems and epidemic control.
\end{abstract}

\maketitle
\textbf{Keywords:}  Optimal control; pattern-preserving sequence; infinite horizon; state constraints; $\Gamma$-convergence; coercivity; epidemic control;  switched systems.

\medskip

\textbf{2020 Mathematics Subject Classification:} 49J15, 49J45, 93C15.

\section{Introduction}
In this paper, we aim to establish some sufficient conditions under which the structure of the optimal controls of finite-horizon problems is 
inherited by at least one solution of the  corresponding problem on the infinite horizon. Such problems naturally arise, for instance, in epidemic control when it is desirable to conduct the study without constraining the epidemic horizon to a bounded interval (see, e.g, \cite{FreddiGoreac23,DellaRossaFreddi24}). Similar applications can be imagined also in other branches of optimal control theory such as  switching control systems, \cite{Lib03,ChitMasSig25}. Our approach is to consider a  variational formulation of the control problems and study the variational convergence of such sequence (that is, the convergence of optimal controls with final time $T$ to an optimal control of the infinite horizon problem)
by the celebrated tool of De Giorgi's $\G$-convergence (see~\cite{Braides02,DalMaso93} for an overview).    
The optimal control problems are thus formulated as a variational problem
\[
\inf_{\cU\times \cX}\cF_T=\inf_{ \cU\times \cX}\big(\cJ_T+\chi_\cA\big),
\]
where $\cU$ is the \emph{control space}, $\cX$ is the \emph{state space}, $\cF_T: \cU\times \cX\to (-\infty,+\infty]$ is the \emph{joint functional} defined  as the sum of the \emph{cost functional} $\cJ_T:\cU\times \cX\to (-\infty,+\infty]$ and the indicator function $\chi_\cA$ taking the value zero on the set $\cA\subset  \cU\times \cX$ of \emph{admissible pairs} and $+\infty$ otherwise. The parameter $T\in (0,\infty]$ in our setting represents the \emph{final time}, while $[0,T]$ is the corresponding \emph{horizon}.
We are then led to study the $\Gamma$-convergence of the functionals $\cF_T$ as $T\to\infty$.

Variational convergence of optimal control problems is an established and fruitful path of research. In the seminal paper~\cite{ButtDalMas82}, the authors employed  $\Gamma$-convergence tools to provide conditions under which optimal pairs for a sequence of parametrized problems converge to an optimal pair of a limiting problem. Then, several extensions and related results have been provided in the literature; see for example~\cite{Buttazzo89,BC89, F2000, BelButFre93} and references therein. 

A novelty of this manuscript lies in the fact that the parameter $T$, in the considered sequences of optimal control problems, represents the \emph{final time} which, to the best of our knowledge, is a case not considered before in the literature in a general framework. 
This setting, and in particular the ``moving'' final time, introduces new technical challenges that cannot be overcome by existing results. One of them is that it is required to work on spaces of functions defined on unbounded domains (the semi-line $[0,\infty)$) and to introduce tailored \emph{local} Sobolev spaces. 

Another feature of our setting is that it allows to prescribe  \emph{state-constraints} of the form $x(t)\in X$ for almost all $t\in [0,\infty)$, for a given set $X$. 

Unfortunately, these peculiarities have rendered unsuccessful any attempt to use the aforementioned early results on $\G$-convergence of optimal control problems, because the latter are based on a continuity property of the cost functional with respect to the state variable that seems very hard to satisfy in our setting.
For these reasons, the $\G$-convergence is established by using novel proof techniques, in order to handle the particular  features of our framework.

Once  $\G$-convergence has been  established for a family of functionals $\cF_T$ (representing optimal control problems on the finite horizon $[0,T]$), under a suitable \emph{coercivity} assumption we are able to prove  pattern preservation  for such family: if the optimal controls for $\cF_T$ share a common structure as $T$ diverges, the same structure is also inherited by an infinite-horizon optimal control. This is particularly relevant since, in many cases,  optimality conditions are more easily derived for finite-horizon problems, for instance, via Pontryagin's principle (see for example~\cite[Chapter 9]{Vinter2010Book}). Indeed, although Pontryagin-like techniques exist to  address infinite-horizon control problems (see for example~\cite{Halkin1974,AseeKry04,CANNFra18,BasCassFra18} and references therein), they often yield weaker or less tractable necessary conditions compared to their finite-horizon counterparts. This is primarily due to the lack of a ``natural'' terminal transversality condition at the final time 
$T$ (which in these cases is equal to $\infty$).

The underlying idea of our work is inspired by recent papers in which some specific problems arising from epidemic control
are studied (\cite{FreddiGoreac23,DellaRossaFreddi24}). In the latter, the $\Gamma$-convergence tool is used to solve infinite-horizon optimal controls with state-equations defined by a (rather structured) bi-dimensional controlled SIR model (see~\cite{anderson1992infectious}  for an overview of such epidemic models).
Our manuscript extends these case-specific results into an abstract and more comprehensive framework that encompasses several other  models studied in optimal control theory. Indeed, the considered  admissible set $\cA$ will be defined by general \emph{controlled differential equations} of the form
\begin{equation}\label{eq:DiffInclINTRO}
x'(t)= a(t,x(t))+b(t,x(t))u(t)
\end{equation}
where $a,b$ are  Carath\'eodory functions and $u$ is the control variable, while the cost functionals are of the  form
\[
\cJ_T(u,x)=\int_0^T \ell(t,x,u)+\chi_X(x)\,dt
\]
where $\ell$ is a normal convex integrand and the indicator  $\chi_X(x)$ is added to model the  state-constraint.

Among the final examples, besides  the aforementioned epidemic case, we give particular attention to switched systems, for which original results are obtained regarding the structure of optimal controls over an infinite horizon.

The paper is organized as follows.  In Section~\ref{sec:Preliminaries} we  give the definition of pattern-preserving sequences of optimal control problems and establish a general theorem relating it with $\Gamma$-convergence theory. In Section~\ref{sec:MainResults} we collect, without proofs, the main results of the paper concerning  more specific sequences of optimal control problems. In Section~\ref{sec:ODEandchar} and~\ref{Sec:Boundedness}, we introduce and study the basic properties of   controlled differential equations defining the set of admissible pairs. In Section~\ref{sec:ParticularCases}, we study some issues concerning coercivity for two remarkable classes of control systems falling in our theory.   
 Section~\ref{Sec:StateConstr} is devoted to prove the  pattern preservation results presented in Section \ref{sec:MainResults}. 
 Our  results are illustrated in Section~\ref{sec:Examples} through examples drawn from switched systems and epidemic control. 
In Appendix~\ref{subsec_locSob} we collect some preliminaries concerning local Sobolev spaces and their topologies. This functional setting is the natural one for our space of states.  Although it does not contain particularly original results, we have chosen to retain this section since, to the best of our knowledge, the preliminary material it collects is not presented in an organized manner elsewhere in the literature.
 \
 
\textbf{Notation:} $\overline \R:=[-\infty,\infty]$ denotes  the set of extended real numbers.
By $|\cdot|$ we denote any norm in $\R^k$, $k\in\N$; when applied to matrices, it denotes the associated \emph{operator norm}. By $\B(x_0,r)$ we denote the open ball wit center $x_0$ and radius $r$ in the considered norm. The usual norm in the Lebesgue space $L^p$ will be denoted by $\|\cdot\|_p$. Given $p\in[1,\infty]$, as usual, $p'$ denotes the conjugate exponent of $p$, defined by the equality $\frac{1}{p'}+\frac{1}{p}=1$ and the convention $1/\infty=0$.  
By $\cD(\Omega,\R^k)$ we denote the space of vector valued test functions on the open set $\Omega$ and the dual  $\cD'(\Omega,\R^k)$ is the space of distributions.   Given a set $A$,  we define 
\[
\chi_A(x):=\begin{cases}
0&\text{if }x\in A\\
+\infty&\text{if }x\notin A
\end{cases}\ \mbox{ and }\ \mathbf{1}_A(x):=\begin{cases}
1&\text{if }x\in A\\
0&\text{if }x\notin A
\end{cases}
\]
the \emph{indicator} and \emph{characteristic}  functions of $A$, respectively.
We will also make use of the following standard definition (see, e.g., \cite[Definition 2.1.1]{Buttazzo89}):
a function $\ell:[0,\infty)\times \R^n\times \R^m\to (-\infty,+\infty]$  is said to be 
\begin{enumerate}[leftmargin=*]
\item an \emph{integrand} if it is measurable with respect to the Lebesgue-Borel measure $\cL\otimes \cB_n\otimes \cB_m$ on $[0,\infty)\times \R^n\times \R^m$;
\item a \emph{normal integrand}  if it is an integrand and $\ell(t,\cdot,\cdot)$ is lower semicontinuous for almost all $t\ge0$;
\item a \emph{normal convex integrand} if it is a normal integrand and, for almost all $t\ge0$, the map  $u\mapsto \ell(t,x,u)$ is convex for all $x\in \R^n$.
\end{enumerate}

\section{$\Gamma$-convergence and pattern-preserving sequences }\label{sec:Preliminaries}
 For a general introduction to $\Gamma$-convergence and related topics, we refer to the monographs~\cite{DalMaso93,Braides02}.\\

Let us consider two topological spaces $\cU$ and $\cX$, named the space of \emph{controls} and \emph{states}, respectively. 
Given a \emph{cost functional} $\cJ:\cU\times \cX\to \overline \R$ and a set of \emph{admissible control-state pairs} $\cA\subset \cU\times \cX$, we consider the \emph{optimal control problem} defined by
\begin{equation}\label{eq:AbstractOptContrProb0}
\begin{array}{c}
\displaystyle\inf_{\cU\times \cX}\cJ(u,x)\\[-1ex]
\ \\[-1ex]
\text{subject to}\\[-1ex]
\ \\[-1ex]
(u,x)\in \cA.
\end{array}
\end{equation}
By considering the indicator function of $\cA$, problem~\eqref{eq:AbstractOptContrProb0} takes the form 
\begin{equation}\label{eq:AbstractOptContrProb}
\inf_{\cU\times \cX} (\cJ+\chi_{\cA}),
\end{equation}
that is, the optimal control problem is rewritten as a minimization problem  for a {\em joint functional}  $\cF:=\cJ+\chi_\cA$. With a small abuse of language, we will speak of {\em the optimal control problem for $\cF$}.  
As usual,  we say that $u\in \cU$ is an \emph{optimal control} for $\cF$ if there exists $x\in \cX$ such that $\cF(u,x)=\inf_{\cU\times\cX}\cF$.

The topologies on the spaces $\cU$ and $\cX$ start to play a role when, besides the single problem, we 
consider a one-parameter family of minimization problems for functionals $\cF_T:=\cJ_T+\chi_{\cA_T}$, where the parameter $T\ge0$ plays the role of time, and look for a variational limit as $T\to\infty$, with respect to the convergences induced by the chosen topologies.

An  appropriate notion of variational convergence  is provided by  sequential $\Gamma$-limits, whose definitions for a general sequence $\cF_k:\cU\times \cX\to \overline \R$ are
$$
\begin{array}{l}
\displaystyle\G^-_\seq(\cU\times\cX)\liminf_\koo \cF_k(u,x):=\inf_{u_k\to u}\inf_{x_k\to x}\liminf_\koo
\cF_k(u_k,x_k),\\
\displaystyle\G^-_\seq(\cU\times\cX)\limsup_\koo \cF_k(u,x):=\inf_{u_k\to u}\inf_{x_k\to x}\limsup_\koo
\cF_k(u_k,x_k).
\end{array}
$$  When they coincide, we denote their common value by writing
$$
\G_\seq^-(\cU\times\cX)\lim_\koo \cF_k.
$$

 \begin{rem}\label{rem_lirs}
It is easy to check that a sufficient conditions implying $\G_\seq^-(\cU\times\cX)$-convergence of $\cF_k$ to $\cF$ is given by 
\begin{enumerate}
\item (\emph{liminf inequality}) for all sequences $(u_k,x_k)\to (u,x)$ we have $\dis \cF(u,x)\leq \liminf_{k\to \infty} \cF_k(u_k,x_k)$;
\item (\emph{recovery sequence}) there exists a sequence $(u_k,x_k)\to (u,x)$ such that  $\dis \cF(u,x)= \lim_{k\to \infty} \cF_k(u_k,x_k)$.
\end{enumerate}
Clearly, 1.\ is also necessary.  On the other hand, the necessity of 2., that is the equivalence (between 1.-2.\ and $\G_\seq^-$-convergence) 
holds in first countable spaces (as well as the equivalence between topological and sequential $\G$-limits, see \cite[Proposition 8.1]{DalMaso93}) or for equi-coercive sequences in spaces in which relatively compact sets are metrizable like, e.g., Banach spaces with a separable dual space, or dual of a separable Banach space, with the weak, respectively weak*, topology (see  \cite{AS1976} or \cite{DalMaso93}).  
This is, in fact, our case. Nevertheless, in the sequel, only the sufficiency  will be used (in the proof of Theorem \ref{thm:MainTheorem}).
\end{rem}

Even though they do not always coincide with the topological ones, sequential $\Gamma$-limits retain many good properties useful in applications. In particular, also sequential $\Gamma$-convergence has a {\em variational character}, that is, it ensures convergence of minima 
and minimizers (under equi-coercivity) according to the following definitions and  theorem (\cite[Proposition 2.1]{ButtDalMas82}; see also \cite{Buttazzo89,Braides02}). 

\begin{definition}\label{defn:EquiCoercivityGeneral}
We say that  $(\cF_k)$ is {\em sequentially equi-coercive} if, for every $C>0$ and  every sequence $(u_k,x_k)$ such that $\cF_k(u_k,x_k)\le C$, there exists a subsequence of $(u_k,x_k)$ converging to some  $(u,x)$ in $\cU\times\cX$.
\end{definition}

\begin{definition}\label{def_ms}
    A sequence $(u_k,x_k)$ 
in  $\cU\times \cX$ is said to be {\em minimizing} for the sequence $\cF_k$ if 
\begin{equation}\label{lili}
\liminf_{k\to\infty}\cF_{k}(u_k,x_k)=\liminf_{k\to\infty}\inf_{\cU\times \cX}\cF_{k}.
\end{equation} 
\end{definition}

\noindent It is easy to see that a minimizing sequence always exists.
The sequences of optimal pairs (if the optima exist) are trivially minimizing.  
Moreover, in the case of a constant sequence  (i.e.,  $\cF_k=\cF$ for every $k$) the previous Definition \ref{def_ms} 
reduces to the classical one for the functional $\cF$.

\begin{thm}[variational property]\label{lemma:GammaCOnvMAINProp}
Consider a sequence $\cF_k:\cU\times \cX\to \overline \R$  that  $\Gamma^-_{\rm seq}(\cU\times\cX)$-converges to $\cF$ and
let $(u_k,x_k)$ be a minimizing sequence 
The following propositions hold.
\begin{enumerate}
\item     
If $(u_k,x_k)\to (u,x)\in \cU\times \cX$, then 
\begin{equation}\label{eq_F=infF}
\cF(u,x)=\inf_{\cU\times \cX}\cF=\lim_\koo\big[\inf_{\cU\times \cX}\cF_{k}\big];
\end{equation}
actually, $(u,x)$ is an optimal pair for $\cF$.
\item   If $\cF\not\equiv+\infty$ and  $(\cF_{k})$ is sequentially equi-coercive,  then 
\begin{enumerate}
\item there exists an optimal pair $(u,x)$ of $\cF$ and a subsequence 
$(u_{k_n},x_{k_n})\to (u,x)$ in $\cU\times \cX$;
\item if, moreover,   
\begin{enumerate}
\item $\cX$ is metrizable
\item for every optimal control $u\in\cU$ there exists at most one $x\in\cX$ such that $\cF(u,x)<\infty$,
\end{enumerate}
then
\begin{center}
 $u_k\to u\in \cU$ implies  $x_k\to x$ and the equalities \eqref{eq_F=infF} hold;
 \end{center}
actually, $u$ is an optimal control for $\cF$ and $x$ is the (unique) corresponding optimal state.
\end{enumerate}
\end{enumerate}
\end{thm}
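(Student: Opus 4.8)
The plan is to deduce everything from the two one-sided estimates that, by Remark~\ref{rem_lirs}, together make up $\Gamma^-_\seq$-convergence: the \emph{liminf inequality} and the \emph{recovery} (limsup) bound; I shall also use that both survive the extraction of subsequences, so that every subsequence of $(\cF_k)$ still $\Gamma$-converges to $\cF$. Writing $m:=\inf_{\cU\times\cX}\cF$ and $m_k:=\inf_{\cU\times\cX}\cF_k$, the backbone of the argument is the chain
\[
\limsup_\koo m_k\ \le\ m\ \le\ \cF(u,x)\ \le\ \liminf_\koo m_k .
\]
The first inequality comes from recovery: for every $(v,y)$ and $\varepsilon>0$ there is $(v_k,y_k)\to(v,y)$ with $\limsup_k\cF_k(v_k,y_k)\le\cF(v,y)+\varepsilon$, and since $m_k\le\cF_k(v_k,y_k)$ this forces $\limsup_k m_k\le\cF(v,y)$, hence $\le m$ after minimizing in $(v,y)$. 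The last inequality is the liminf inequality applied to the converging minimizing pair, combined with Definition~\ref{def_ms} to rewrite $\liminf_k\cF_k(u_k,x_k)$ as $\liminf_k m_k$. For part~1 this already closes the proof: the four quantities are squeezed to a single value, so $\cF(u,x)=m=\lim_k m_k$ and $(u,x)$ is optimal; notably, no coercivity enters here.

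For part~2(a) I would first use $\cF\not\equiv+\infty$ and the recovery bound to get $\liminf_k m_k\le\limsup_k m_k<+\infty$. Equi-coercivity (Definition~\ref{defn:EquiCoercivityGeneral}) then does two jobs. It upgrades $\limsup_k m_k\le m$ to $\lim_k m_k=m$: were $\liminf_k m_k<m$, I would take near-minimizers along a subsequence realizing that liminf, extract a convergent one, and contradict the liminf inequality at the limit. And it produces a limit: passing to a subsequence on which $\cF_{k_n}(u_{k_n},x_{k_n})\to\liminf_k m_k<\infty$, the energies are bounded, so a further subsequence satisfies $(u_{k_n},x_{k_n})\to(u,x)$. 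This subsequence is still minimizing for $(\cF_{k_n})$ and $\cF_{k_n}\toG\cF$, so part~1 applies and gives optimality of $(u,x)$ together with~\eqref{eq_F=infF}.

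For part~2(b) I would first pin down the limit. By part~2(a) a subsequence converges to an optimal pair whose control component, being also a limit of $u_k\to u$, must be $u$; hypothesis~(ii) then makes the corresponding state the \emph{unique} $x$ with $\cF(u,x)<\infty$, so $u$ is optimal and $x$ is its unique optimal state. It remains to promote subsequential convergence of the states to convergence of the whole sequence. Here the metrizability of $\cX$ in~(i) is used through first-countability: it suffices to check that every subsequence of $(x_k)$ has a further subsequence converging to $x$. For optimal, or merely energy-bounded, minimizing pairs one has $\cF_k(u_k,x_k)\to m<\infty$, so $(u_k,x_k)$ lies in a fixed sublevel set and is precompact; any cluster point is optimal by part~1, has control $u$, and hence equals $(u,x)$ by the uniqueness~(ii). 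Thus every subsequence of $(x_k)$ clusters only at $x$, and the metric criterion yields $x_k\to x$ and~\eqref{eq_F=infF}.

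The step I expect to be the real obstacle is exactly this last upgrade in~2(b). The liminf inequality only constrains the sequence along indices that realize the infimal energy, so the whole-sequence conclusion hinges on controlling $\cF_k(u_k,x_k)$ \emph{uniformly} in $k$ rather than merely along a subsequence; it is this uniform energy bound --- automatic when the $(u_k,x_k)$ are optimal pairs, since then $\cF_k(u_k,x_k)=m_k\to m$ --- that, via equi-coercivity, makes $(x_k)$ precompact and forces every cluster point to be the single optimal state. This is precisely where metrizability of $\cX$ and the single-valuedness hypothesis~(ii) are indispensable, and where a purely topological argument on a non-first-countable $\cX$ would break down.
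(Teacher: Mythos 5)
Your argument is correct and, for parts \emph{2.(a)} and \emph{2.(b)}, follows essentially the same route as the paper: extract a convergent subsequence via equi-coercivity from the finiteness of $\liminf_k\inf\cF_k$, apply part \emph{1.}\ to that subsequence, and then use hypothesis \emph{(ii)} plus metrizability of $\cX$ through the subsequence--subsequence criterion. The genuine difference is part \emph{1.}, which the paper simply cites from Buttazzo--Dal Maso, whereas you prove it from scratch via the squeeze $\limsup_k m_k\le m\le\cF(u,x)\le\liminf_k m_k$; your derivation of the first inequality from the $\varepsilon$-approximate recovery sequences implicit in the definition of $\Gamma^-_{\rm seq}\limsup$ (rather than from the exact recovery condition of Remark~\ref{rem_lirs}, whose necessity would require extra hypotheses) is the right way to do this, and it makes the statement self-contained. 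You are also more careful than the paper on two points it glosses over: in \emph{2.(a)} the paper asserts that $\cF_k(u_k,x_k)$ is ``upper bounded'' when $\liminf_k\inf\cF_k<\infty$ only guarantees a bounded subsequence (your extraction of a subsequence realizing the liminf, together with the observation that subsequences inherit both the $\Gamma$-convergence and the minimizing property, fixes this); and in \emph{2.(b)} you correctly identify that upgrading subsequential convergence of $(x_k)$ to full convergence needs a \emph{uniform} energy bound, since Definition~\ref{def_ms} only controls $\cF_k(u_k,x_k)$ along indices realizing the liminf --- a point on which the paper's ``the same argument applies to any subsequence'' is silent. One cosmetic remark: in that last step a cluster point $(u,x')$ of a non-minimizing subsequence need not be optimal, so ``optimal by part \emph{1.}'' is a slight overstatement; what you actually need, and what the liminf inequality plus the uniform bound delivers, is only $\cF(u,x')<\infty$, after which hypothesis \emph{(ii)} forces $x'=x$.
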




\begin{proof}
Part {\em1.}\ of the statement has been proven in \cite[Proposition 2.1]{ButtDalMas82}. To prove part {\em2.}, we start by observing that $\displaystyle\liminf_{k\to\infty}\inf_{\cU\times \cX}\cF_k$ must be smaller than $+\infty$ since, otherwise, by definition of $\G_\seq$-liminf we immediately would have $\cF\equiv+\infty$, which is excluded. Hence, by \eqref{lili}, the sequence $\cF_k(u_k,x_k)$ is upper bounded and, by equi-coercivity, there exists a subsequence $(u_{k_n},x_{k_n})$ converging to some $(u,x)$ in $\cU\times \cX$. By part~{\em1.}\ of the theorem, $(u,x)$ turns out to be a minimum point of $\cF$ and the proof of this part {\em2.(a)} is concluded. 

It remains to prove {\em 2.(b)}.   Let $u_k\to u$ in $\cU$. As before,  there exists a subsequence $x_{k_n}$ converging to some $x$ in $\cX$. By the previous part of the theorem, $(u,x)$ turns out to be an optimal pair of $\cF$ and the equalities \eqref{eq_F=infF} hold. The same argument applies to any subsequence of $x_k$ that, therefore, admits a subsequence converging always to the same $x$ (by hypothesis {\em ii.}). Since $\cX$ is metrizable, than the whole sequence converges to $x$ and 
the theorem is completely proved. 
\end{proof}

\begin{rem}
Part {\em 2.(b)} of the theorem is less standard but applies, in particular, to optimal control problems like that  of  this paper in which the state equation admits at most one solution for every choice of the control,  and the states belong  to a metric  space $\cX$. 
\end{rem}

The variational property of $\Gamma$-convergence can be applied to show that, 
under $\G$-convergence and coercivity assumptions, if the \emph{optimal control policies} on the  finite horizons share a
certain common structure, the same control pattern will apply also to the infinite-horizon problem.

This result is obtained for a specific choice of the control space that, from this point on and for the rest of the paper, will be   
\begin{equation}\label{defU}
\cU=L^p((0,\infty),U), \mbox{ with $p\in (1,\infty]$,}
\end{equation}
and 
$$
U=\begin{cases}
    \R^m &\mbox{if } p<\infty\\
     \mbox{a closed and convex subset of } \R^m&\mbox{if } p=\infty,
\end{cases}
$$
equipped with its \emph{weak$^\star$}, or $\sigma (L^{p},L^{p'})$, topology coming from the duality $L^p=(L^{p'})^*$. We note that it is appropriate to speak of weak$^\star$ topology also for $p$  finite, remembering that, in such reflexive cases, it coincides with the {\em weak} one. Since the topology has been fixed once and for all,  the convergence in this space will be denoted by the generic symbol $\to$. For simplicity, we use this notation and terminology along the whole paper.

\begin{defn}[pattern-preserving family]\label{defn:PatternPreserving}
 Let be given a family 
$\cF_T:\cU\times \cX\to (-\infty,+\infty]$, $T\in(0,\infty]$, of optimal control problems and a sequence $0<T_k\to\infty$ as $k\to\infty$. The family $(\cF_T)$   
is said to be  $(T_k)$-{\em pattern preserving} if the following property is satisfied: 
 for any minimizing sequence  $(u_k,x_k)\in \cU\times \cX$ 
of $(\cF_{T_k})$ with 
\begin{enumerate}
\item     ${u_k}_{\vert_{[0,T_k]}}$ represented   in the form
\begin{equation}\label{eq:OptimalControlsPiecewise}
{u_k}_{\vert_{[0,T_k]}}=\sum_{j=1}^N u_j^k\mathbf{1}_{[\tau^k_{j-1},\tau^k_{j})}
\end{equation}
for suitable  $N\in \N\setminus\{0\}$, $u^k_1,\dots, u^k_N\in \cU$ and a partition $0=\tau^k_0\leq \tau_1^k\leq \dots\leq \tau_{N-1}^k\leq \tau^k_N=T_k$, 
\item and such that 
\begin{equation}\label{tjkujk}
\begin{array}{l}
\tau^{k}_j\to \tau^{\infty}_j\mbox{ in }\overline{\R},\\[1ex]
u_j^k\to u_j^\infty\mbox{ in }\cU,
\end{array}
\mbox{ for any }
j\in \{1,\dots, N\},
\end{equation} 
\end{enumerate}
it turns out that
\[
u_\infty:=\sum_{j=1}^{N} u_j^\infty\mathbf{1}_{[\tau^\infty_{j-1},\tau^\infty_{j})}\quad 
\mbox{ (with $[+\infty,+\infty):=\varnothing$)}
\]
is an optimal control for $\cF_\infty$.
\end{defn}

The definition may appear cumbersome at first glance. In particular, it says  that if the $\cF_{T_k}$ have optimal controls $u_k$ 
with a    \eqref{eq:OptimalControlsPiecewise}-like piecewise structure, then the limiting control problem 
$\cF_\infty$
  admits an optimum with the same structure, whose coefficients and partition points are the limits of those of $u_k$.

As a first result, we show that (sequential) $\Gamma$-convergence and  equi-coercivity are sufficient conditions for ensuring pattern preservation.

\begin{thm}[pattern preservation]\label{prop:COnvergenceofOptima}  
Consider a parametrized family of optimal control problems $\cF_T:\cU\times \cX\to (-\infty,+\infty]$ and suppose that there exists a sequence ${\color{red}0<}T_k\to \infty$ such that
\begin{enumerate}
    \item $\cF_{T_k}$ is sequentially equi-coercive;
    \item $\displaystyle \G^-_{\rm seq}(\cU\times\cX)\lim_{k\to\infty}\cF_{T_k}=\cF_\infty$.
\end{enumerate} 
 Then $\cF_T$ is $(T_k)$-pattern preserving.
\end{thm}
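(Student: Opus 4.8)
The plan is to verify Definition~\ref{defn:PatternPreserving} directly. If $\cF_\infty\equiv+\infty$ then $\inf_{\cU\times\cX}\cF_\infty=+\infty$ and \emph{every} control is optimal, so the conclusion is vacuous; I may therefore assume $\cF_\infty\not\equiv+\infty$. Fix a minimizing sequence $(u_k,x_k)$ of $(\cF_{T_k})$ obeying the structural hypotheses~\eqref{eq:OptimalControlsPiecewise}--\eqref{tjkujk}. Since $(\cF_{T_k})$ is sequentially equi-coercive and $\G^-_{\rm seq}$-converges to $\cF_\infty\not\equiv+\infty$, Theorem~\ref{lemma:GammaCOnvMAINProp}, part~\emph{2.(a)}, applies to \emph{this} minimizing sequence and produces a subsequence $(u_{k_n},x_{k_n})\to(\bar u,\bar x)$ in $\cU\times\cX$ whose limit is an optimal pair for $\cF_\infty$. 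In particular $\bar u$ is an optimal control for $\cF_\infty$, so the whole theorem reduces to the identification $\bar u=u_\infty$.

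To establish this identification I would show that $u_{k_n}\weaks u_\infty$ in $\cU=L^p((0,\infty),\R^m)$ and then invoke uniqueness of weak$^\star$ limits. Let $\varphi\in L^{p'}((0,\infty),\R^m)$ be compactly supported, say $\operatorname{supp}\varphi\subseteq[0,S]$; since $p'\in[1,\infty)$ (because $p\in(1,\infty]$), such $\varphi$ form a dense subset of $L^{p'}$ and it suffices to test against them. As $T_k\to\infty$, for $n$ large one has $S<T_{k_n}$, so $\varphi$ never meets the unspecified tail of $u_{k_n}$ on $(T_{k_n},\infty)$ and, by~\eqref{eq:OptimalControlsPiecewise},
\[
\int_0^\infty\inp{u_{k_n}}{\varphi}\,dt=\sum_{j=1}^N\int_0^\infty\inp{u_j^{k_n}}{\varphi\,\mathbf{1}_{[\tau^{k_n}_{j-1},\tau^{k_n}_j)}}\,dt.
\]
Now $u_j^{k_n}\weaks u_j^\infty$ by~\eqref{tjkujk}, while $\tau^{k_n}_j\to\tau^\infty_j$ gives $\varphi\,\mathbf{1}_{[\tau^{k_n}_{j-1},\tau^{k_n}_j)}\to\varphi\,\mathbf{1}_{[\tau^\infty_{j-1},\tau^\infty_j)}$ strongly in $L^{p'}$ by dominated convergence. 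Since the duality pairing of a weak$^\star$-convergent sequence with a strongly convergent one passes to the limit, summing over $j$ yields
\[
\int_0^\infty\inp{u_{k_n}}{\varphi}\,dt\longrightarrow\sum_{j=1}^N\int_{\tau^\infty_{j-1}}^{\tau^\infty_j}\inp{u_j^\infty}{\varphi}\,dt=\int_0^\infty\inp{u_\infty}{\varphi}\,dt.
\]
On the other hand $u_{k_n}\weaks\bar u$ forces the left-hand side to tend to $\int_0^\infty\inp{\bar u}{\varphi}\,dt$; as $\varphi$ runs over a dense subset of $L^{p'}$ and $u_\infty\in L^p$ (indeed $\|u_\infty\|_p^p\le\sum_{j=1}^N\|u_j^\infty\|_p^p<\infty$), this gives $\bar u=u_\infty$.

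I expect the identification of the limit to be the only real obstacle, and it is delicate for three intertwined reasons: the representation~\eqref{eq:OptimalControlsPiecewise} is prescribed only on $[0,T_{k_n}]$, so the a priori uncontrolled tail on $(T_{k_n},\infty)$ must be neutralized --- this is exactly why one restricts to compactly supported test functions and uses $T_k\to\infty$; the pieces $u_j^{k_n}$ converge only weakly$^\star$, so one cannot pass to the limit inside the product $u_j^{k_n}\,\mathbf{1}_{[\tau^{k_n}_{j-1},\tau^{k_n}_j)}$ and must instead balance this weak$^\star$ convergence against the \emph{strong} convergence of the truncated test functions; and some endpoints $\tau^\infty_j$ may equal $+\infty$, which is harmless here because the corresponding intervals are eventually empty against a compactly supported $\varphi$, consistently with the convention $[+\infty,+\infty):=\varnothing$. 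Once $\bar u=u_\infty$ is secured, the optimality of $u_\infty$ for $\cF_\infty$ comes for free from the optimal pair $(\bar u,\bar x)$ already obtained, and Definition~\ref{defn:PatternPreserving} is verified.
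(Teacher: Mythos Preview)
Your proof is correct and follows essentially the same route as the paper's: reduce to $\cF_\infty\not\equiv+\infty$, extract via Theorem~\ref{lemma:GammaCOnvMAINProp} a subsequence converging to an optimal pair, and identify the limit control with $u_\infty$ by pairing the weak$^\star$ convergence $u_j^{k_n}\weaks u_j^\infty$ against the strong $L^{p'}$ convergence of the truncated test functions. The only cosmetic differences are that the paper tests against $\varphi\in\cD((0,\infty),\R^m)$ and concludes via uniqueness of the distributional limit, whereas you test against compactly supported $\varphi\in L^{p'}$ and use density; and the paper proves the strong convergence of $\varphi\,\mathbf{1}_{[\tau^{k_n}_{j-1},\tau^{k_n}_j)}$ via a direct symmetric-difference estimate rather than dominated convergence. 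Your explicit handling of the uncontrolled tail on $(T_{k_n},\infty)$ via the compact support of $\varphi$ is, if anything, a bit more careful than the paper's presentation.
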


\begin{proof}
We can suppose that $\cF_\infty \not\equiv +\infty$, otherwise the claim is trivially satisfied.
Consider a minimizing  
 sequence  $(u_k,x_k)\in \cU\times \cX$
 with ${u_k}_{\vert_{[0,T_k]}}$  in the form~\eqref{eq:OptimalControlsPiecewise} and satisfying~\eqref{tjkujk}
for a suitable partition  $0=\tau^k_0\leq \tau_1^k\leq \dots\leq \tau_{N-1}^k\leq \tau^k_N=T_k$.
Note that such a sequence always exists. 

Under our hypotheses, the variational property of $\G$-convergence (Theorem~\ref{lemma:GammaCOnvMAINProp}, Point~\emph{2.}) implies that $(u_k,x_k)\to (u,x)$ in $\cU\times \cX$ (up to a subsequence), where $(u,x)\in \cU\times \cX$ is such that $\cF_\infty(u,x)=\inf_{\cU\times \cX}\cF_\infty$, i.e., is an optimal pair for the limiting problem $\cF_\infty$.

The thesis follows by showing that $\lim_{k\to \infty}u_k=u_\infty$ in the sense of distributions and by uniqueness of the limit.
For any $\varphi\in \cD((0,\infty),\R^m)$  we have that 
\[
\begin{aligned}
\lim_{k\to \infty}\int_0^\infty u_{k}(t)\cdot \varphi(t)\,dt&
=\sum_{j=1}^{N}\lim_{k\to \infty}
\int_{0}^{\infty} u_j^k(t)\cdot\varphi(t)\mathbf{1}_{[\tau^k_{j-1},\tau^k_{j}]}(t)\,dt
\\&=\sum_{j=1}^{N}
\int_{0}^{\infty} u_j^\infty(t)\cdot\varphi(t)\mathbf{1}_{[\tau^\infty_{j-1},\tau^\infty_{j}]}(t)\,dt=\int_0^\infty u_{\infty}(t)\cdot\varphi(t)\,dt
\end{aligned}
\]
where the last but one equality follows by the fact that 
\begin{equation}\label{eq:StrongConvergenceVarPhi}
\varphi\mathbf{1}_{[\tau^k_{j-1},\tau^k_{j}]}\stackrel{k\to\infty}{\longrightarrow}
\varphi\mathbf{1}_{[\tau^\infty_{j-1},\tau^\infty_{j}]}\quad\text{ strongly in } L^{p'}((0,\infty),\R^m)
\end{equation}
while 
$u_j^k\to u_j^\infty$ weakly* in $L^p((0,\infty),\R^m)$.
The strong convergence in~\eqref{eq:StrongConvergenceVarPhi} follows by observing  that
$$
\|\varphi\mathbf{1}_{[\tau^k_{j-1},\tau^k_{j}]}- 
\varphi\mathbf{1}_{[\tau^\infty_{j-1},\tau^\infty_{j}]}\|_{p'}^{p'}=\int_{E_k}|\varphi(t)|^{p'}\,dt
$$
where 
$$
E_k=\big([\tau^k_{j-1},\tau^k_{j}]\Delta [\tau^\infty_{j-1},\tau^\infty_{j}]\big)\cap\text{supp}\,\varphi
$$
being $\Delta$ the symmetric difference between sets,  
and using the fact that, since the support of $\varphi$ is compact, the Lebesgue measure of the set $E_k$ tends to $0$  as $k$ goes to $\infty$.
 This proves that $u_{k}\to u_\infty$ in $\cU$, and the proof is concluded.
\end{proof}


\begin{rem}[$p=\infty$, limit of piecewise constant controls]\label{rem:BangsBangs}
Let $p=\infty$ and, according to \eqref{defU}, $\cU=L^\infty((0,\infty),U)$ with the weak$ ^\star$ topology. 
Given $T_k\to \infty$, suppose that $(u_k,x_k)\in \cU\times \cX$ be  a minimizing sequence such that the restrictions  ${u_k}_{\vert_{[0,T_k]}}$ be \emph{piecewise constant} with the same number of discontinuities and the same values, i.e., of the form~\eqref{eq:OptimalControlsPiecewise} with  $u_j^k\equiv u_j\in \R^m$ for all $k\in \N$ and for all $j\in \{1,\dots, N\}$. Under the hypothesis of Theorem~\ref{prop:COnvergenceofOptima} suppose that  $\tau^{k}_j\to \tau^{\infty}_j\in \overline \R$,  for every $j$. 
 Then, there exists  $u_\infty \in \cU$ optimal for $\cF_\infty$ with at most $N$ discontinuity points (and thus eventually constant) of the form 
\[
u_\infty:=\sum_{j=1}^{N} u_j\mathbf{1}_{[\tau^\infty_{j-1},\tau^\infty_{j})}.
\]
Actually, this  formalizes the intuitive idea that, if it can be established (for instance, via  Pontryagin's  principle) that the finite-horizon optimal controls exhibit a \emph{bang-bang} structure then, under equi-coercivity, 
the same structure will also be inherited by (at least one) optimal control of the $\Gamma$-limit \emph{infinite-horizon}  problem. 
A  further discussion  on such property and some examples will be provided in following sections.
\end{rem}

\begin{rem}\label{rem_GammaT}
We conclude this section by observing that  sometimes, throughout the paper, it will be more effective to speak about $\Gamma$-convergence of a family $\cF_T$ of functionals indexed by a parameter $T\in(0,+\infty)$, towards a limit functional $\cF_\infty$. By this we  mean that $\cF_{T_k}$ $\Gamma$-converges to $\cF_\infty$
for every sequence $0< T_k\to\infty$. 
\end{rem}

\section{Running assumptions and main results}\label{sec:MainResults}

In the previous section  we have introduced the definition of pattern-preserving sequence of optimal control problems  and  
 established our first result  (Theorem  \ref{prop:COnvergenceofOptima})
for a totally general sequence of problems. In this section  we collect the other main results of the paper concerning  more specific sequences of optimal control problems,  postponing   proofs and  discussion of the assumptions to the next sections.

Actually, we consider  a sequence of  problems 
parametrized by the final time $T\in (0,+\infty]$, with cost functionals of the form
\begin{subequations}\label{eq:OptimalControlProblem}
\begin{equation}\label{eq:CostFunctionalSec3}
J_T(u,x):=\int_0^T \ell(t,x(t),u(t))\,dt
\end{equation}
to be minimized over all admissible control-state  pairs $(u,x)$ belonging to the    
 set 
\begin{equation}\label{eq:FeasibleLinearized}
\begin{aligned}
\AI
 &:=\{(u,x)\in \cU\times\cX\; :\;  x=x^{u,x_0}\},
\end{aligned}
\end{equation}
 where $x^{u,x_0}\in \cX:=W^{1,1}_{\loc}([0,\infty),\R^n)$ denotes the unique (by the assumptions below) global  solution to the Cauchy problem 
\begin{equation}\label{eq:CauchyProblem1}
\hspace{2.6cm}\begin{cases}
x'(t)
=a(t,x(t))+b(t,x(t))u(t)\\
x(0)=x_0.
\end{cases}
\end{equation}
It is worth noting that, here and in the following, the locality in the involved Sobolev (and Lebesgue) spaces is meant in the sense of  restriction to any interval of the form $[0,T)$, $T>0$; for a formal definition and further details we refer to the next section and/or  to Appendix \ref{subsec_locSob}.

The running cost  $\ell$ is allowed to take extended real values and, therefore, can incorporate an additive term of the form $\chi_{X}(x)$ where   $X\subseteq\R^n$ is a closed set representing a \emph{state constraint}. In other terms we   model a state constraint 
of the form 
\begin{equation}\label{mr:sc}
x(t)\in X\   \forall t\in[0,T)
\end{equation}
by requiring that $\ell(\cdot,x,\cdot)\equiv+\infty$ whenever  $x\in\R^n\setminus X$.

In the case $p=\infty$, the choice of the space $\cU$ (see \eqref{defU})  prescribes a constraint on the controls of the type 
\begin{equation}\label{eq:COntrolCOntraint} 
u(t)\in U\ \mbox{for a.a.} \ t\in(0,\infty),
\end{equation}
\end{subequations}
for a convex and closed set $U\subset \R^m$.


Accordingly, the joint functional $F_T:\cU\times\cX\to (-\infty,+\infty]$,  that completely represents the optimal control problem, is defined by $F_T:=J_T+\chi_{\AI}$.

On the state-equations defining the set $\AI$ we assume
Carath\'edory-Lipschitz hypotheses (A0) and a global existence assumption (A1). 
Precisely,  
\begin{itemize}
\item[(A0)]  the functions $a:[0,\infty)\times \R^n\to \R^n$ and  $b:[0,\infty)\times \R^n\to \R^{n\times m}$  are Borel measurable and such that \begin{enumerate}
\item  there exists  $N\in L^{p'}_{\rm loc}([0,\infty))$ and, for any compact set $H\subset\R^n$, there exists $B_H\in L^{p'}_{\rm loc}([0,\infty))$ such that 
\[
\begin{aligned}
 &|b(t,0)|\leq N(t)\;\;\mbox{ for a.a.}\  t\ge0,\\
&|b(t,x_1)-b(t,x_2)|\leq B_H(t)|x_1-x_2|\quad\text{for all }  x_1, x_2 \in H\mbox{ and a.a.}\ t\ge0;
\end{aligned} 
\]
\item  there exists $M\in L^{1}_{\rm loc}([0,\infty))$ and, for any compact set $H\subset\R^n$, there exists $A_H\in L^{1}_{\rm loc}([0,\infty))$ such that
\[
\begin{aligned}
 &|a(t,0)|\leq M(t)\;\;\mbox{ for a.a.}\  t\ge0,\\
&|a(t,x_1)-a(t,x_2)|\leq A_H(t)|x_1-x_2|\quad\text{for all }  x_1, x_2 \in H\mbox{ and a.a.}\ t\ge0.
\end{aligned} 
\]
\end{enumerate} 
\item[(A1)]\label{ass:globalexistence1}
for any $u\in \cU$ and any sequence $u_k\to u$ in $\cU$, we have that: if $x^{u,x_0}\in W^{1,1}_{\loc}([0,\infty),\R^n)$ then 
$x^{u_k,x_0}\in W^{1,1}_{\loc}([0,\infty),\R^n)$ 
for any $k$ large enough.
\end{itemize}

On the running cost function $\ell:[0,\infty)\times \R^n\times \R^m \to [0,+\infty]$ in~\eqref{eq:CostFunctionalSec3} we make the following assumptions. Let us suppose that there exist a non-negative  normal integrand $\ell_1:[0,\infty)\times\R^n\to [0,+\infty]$ and a non-negative normal convex integrand
$\ell_2:[0,\infty)\times \R^n\times \R^m \to [0,+\infty]$ such that 
\begin{enumerate}[label=(\alph*)]\setcounter{enumi}{0}
\item $\ell(t,x,u)=\ell_1(t,x)+\ell_2(t,x,u)$ for all $(t,x,u)\in [0,\infty)\times \R^n\times \R^m$;
\item \emph{(existence of a  bounded greedy control strategy)} there exists $u_g:(0,\infty)\times \R^n\to \R^m$ Borel-measurable  such that $\ell_2(t,x,u_g(t,x))=0$ for almost all $t\ge0$ and all $x\in \R^n$ and
\begin{enumerate}
\item[(b1)] 
  if $p<\infty$, for every $\cY\subset \cX$ bounded in  $L^\infty_{\loc}([0,\infty),\R^n)$  there exists $\alpha_\cY\in L^p(0,\infty)$ such that
$$
|u_g(t,x(t))|\le\alpha_\cY(t)\quad \forall\, x\in \cY \mbox{ and a.a.}\ t\ge0;
$$
\item[(b2)] if $p=\infty$, we have 
$u_g(t,x)\in U$ $\forall\,x\in \R^n$ and a.a.\ $t\ge0$.
\end{enumerate}
\item {\em (coercivity w.r.t.\ $u$)}
\begin{enumerate}
\item[(c1)] if $p<\infty$, there exists $\alpha>0$ and $\gamma\in L^1((0,\infty))$ such that
\begin{equation}\label{eq:CoercivitycostInegra}
\ell_2(t,x,u)\ge \alpha |u|^p-\gamma(t)\quad\forall\,x\in\R^n,\ \forall\,u\in\R^m\mbox{ and for a.a.}\ t\ge0; 
\end{equation}
\item[(c2)]   if $p=\infty$, $U$ be  compact (and convex). 
\end{enumerate}
\end{enumerate}  
Our main results are stated in the following  theorem and its corollary. The first deals with any kind of closed state constraint (and even without, by taking $X=\R^n$), while  the second is limited to compact state constraints. In the first we have to explicitly require a coerciveness assumption with respect to the state that, in the second, is ensured by the compactness of $X$ provided that the Lipschitz condition on the state equation be satisfied is a slightly stronger form.

\begin{theorem}\label{MT:StateConstrPatternPReserving2}
 Besides all assumptions stated above,  
 suppose that the following  coercivity 
 hypotheses w.r.t.\ $x$  be satisfied:
 \begin{enumerate}
\item[]  {\em (coercivity w.r.t.\ $x$)} there exists a sequence $0<T_k\to\infty$ satisfying the following property: for any sequence $u_k\to u$ in $\cU$ and any $x_k\in\cX$ such that  
$\sup_{k\in\N}F_{T_k}(u_k,x_k)<\infty$ we have that  $x_k=x^{u_k,x_0}$ admits a subsequence converging in $\cX$. 
\end{enumerate}
 Then, 
 the family of optimal control problems $F_{T}$ in~\eqref{eq:OptimalControlProblem} is $(T_k)$-pattern preserving.
\end{theorem}

\begin{cor}[compact state constraint]\label{MT:StateConstrPatternPReserving2SC}
Besides all assumptions stated above, suppose that the state-constraint set $X$ be compact. 
Suppose also that 
there exists $q\in (1,p)$ such that hypothesis~\textnormal{(A0)}  holds in a stronger form with $M\in L_\loc^q([0,\infty))$, $N\in L_\loc^{\frac{pq}{p-q}}([0,\infty))$ and $A_H\in L_\loc^q([0,\infty))$, $B_H\in L_\loc^{\frac{pq}{p-q}}([0,\infty))$ for any compact set $H\subset \R^n$. Then, the family of state-constrained optimal control problems $F_{T}$  is $(T_k)$-pattern preserving for every $0< T_k\to \infty$. 
\end{cor}
The next sections will be devoted to discuss the assumptions (that we consider to be  tacitly satisfied throughout the rest of the  paper), prove the results and provide examples.

\section{State equations: basic Carath\'eodory-Lipschitz assumptions}
\label{sec:ODEandchar}



In this section we  discuss the basic assumptions done on the state equation, which ensure existence and uniqueness of solution  to problem~\eqref{eq:CauchyProblem1}. \setcounter{assumption}{-1} 
To this aim, given $q\in [1,\infty]$,  $m\in\N$,  we define the set of \emph{locally $(m,q)$-Sobolev functions on the time interval $[0,\tau)$}, $\tau\in(0,+\infty]$, by
\[
\Wl^{m,q}([0,\tau),\R^n)=\left \{x\in\cD'((0,\tau),\R^n)\; :\;  x_{|(0,T)}\in W^{m,q}((0,T),\R^n)\ \forall\, T\in(0,\tau)\right\},
\]
\noindent where $x_{(0,T)}$ denotes the restriction of the distribution $x$ to all test functions with compact support in $(0,T)$. When $n=1$, the codomain  $\R^n$ will be usually  omitted. Moreover,  when $m=0$ we use also the notation $L_\loc^q([0,\tau),\R^n):=\Wl^{0,q}([0,\tau),\R^n)$. We refer to Appendix \ref{subsec_locSob} for more details, properties and topologies on these spaces.



 Under assumption (A0), given $u\in \cU$, any function $f_u:[0,\infty)\times \R^n\to \R^n$ such that
 $f_u(t,x):=a(t,x)+b(t,x)u(t)$ for almost all $t\in [0,\infty)$ and all $x\in \R^n$,  
 satisfies the classical Carath\'eodory-Lipschitz conditions for existence and uniqueness of solutions.
This implies that, for any $u\in \cU$, there exists a \emph{unique locally absolutely continuous maximal solution} (see~\cite[Section I.5]{Hale}) to the Cauchy problem \eqref{eq:CauchyProblem1}
denoted by $x^{u,x_0}:\text{dom}(x^{u,x_0})\to \R^n$. 
Here, $\text{dom}(x^{u,x_0})$ denotes the effective  interval of definition of the maximal solution, and it is thus of the form $[0,\tau)$ with $\tau>0$ possibly equal to $\infty$.
In other words, for any $u\in \cU$, under hypothesis~\textnormal{(A0)} there exists a unique $x\in \Wl^{1,1}([0,\tau),\R^n)$  maximal solution 
(and thus $[0,\tau)$ denotes the maximal interval of definition), which we denote by $x^{u,x_0}$. 
When $x_0\in \R^n$ and/or  $u$ are fixed or clear from the context, we denote $x^{u,x_0}$ simply by $x^u$ or even $x$.

\section{State equations: global existence and convergence of solutions}
\label{Sec:Boundedness}

In Section \ref{sec:Preliminaries} we have introduced the space of controls  $\cU=(L^p((0,\infty),U),\tau_\cU)$, with $p\in (1,\infty]$ (see \ref{defU})  equipped with its weak$^\star$ topology.  On the other hand, the choice of the space of states $\cX$ and its topology require some preliminary considerations, because hypothesis~\textnormal{(A0)} does not imply forward-in-time \emph{global} existence of solutions, see the subsequent Example~\ref{example:NoGlobalExistence}. On the other hand, it turns out that such property is needed in our $\G$-convergence theorem (the forthcoming Theorem~\ref{thm:MainTheorem}). This lack can be overcome if the optimal control problems are formulated over a \emph{fixed and bounded} time horizon, as proven in~\cite[Lemma~4.2]{ButtDalMas82} (see also~\cite[Proposition~5.3.3]{Buttazzo89}). Unfortunately, the aforementioned result of Buttazzo and Dal Maso cannot be extended to an infinite horizon as it is (see Example \ref{example:NoGlobalExistence}).
  Therefore, besides the constitutive  hypothesis (A0), in Section \ref{sec:MainResults} we also imposed that the state equation satisfies the global existence assumption (A1) .
  

\begin{rem}
Roughly speaking, assumption (A1) requires that, if for a certain input $u$ the solution is globally defined, the same holds for  inputs ``close enough to $u$''.
A well-known constitutive sufficient condition  consists in requiring a \emph{sublinear growth behavior} with respect to $x$ of the right hand side of the differential equation in~\eqref{eq:CauchyProblem1}, 
see~\cite[Corollary 6.3]{Hale}. On the other hand, global existence as stated in assumption (A1) can hold also in much more general cases, as we will see in the sequel. For this reason besides their different nature, we prefer to keep separated  the two assumptions (A0)  and (A1). 
\end{rem}

The following example shows that (A0) does not imply (A1). Moreover, it is also a counterexample to the validity of Lemma~4.2 of Buttazzo and Dal Maso (\cite{ButtDalMas82}) on an infinite horizon. 

\begin{example}\label{example:NoGlobalExistence} 
Consider the scalar system
\[
\begin{cases}
x'=x^2u,\\
x(0)=1,
\end{cases}
\]
that satisfies (A0) since $b(x)=x^2$ is locally Lipschitz.
Let us take $u\equiv 0$ which leads to $x^{u}\equiv 1$ and consider  $u_k:[0,\infty)\to \R$ defined by
\[
u_k(t)=\begin{cases}
r_k\;&\text{if }t\in [0,\frac{1}{r_k}),\\
0 \;&\text{if }t\in [\frac{1}{r_k},\infty),
\end{cases}
\]
where $r_k$ is a sequence of positive real numbers such that $r_k\to 0$. 
We have that $u_k\wto u$ weakly$^\star$ in $L^p(0,\infty)$, for any $p\in (1,\infty]$.
Computing we have
\[
x^{u_k}(t)=\frac{1}{1-r_kt},
\]
with maximal interval of existence equal to $[0,\frac{1}{r_k})$ which is a proper subset of $[0,\infty)$. 
Then, $x^{u_k}\in W^{1,1}_{\loc}([0,\frac{1}{r_k}))$ only and, thus, for such system  assumption (A1)  does not hold (in particular, it fails for $u=0$ and $u_k$ as above).
    \end{example}
Under assumptions (A1)  we are led to choose the state space 
\begin{subequations}\label{defX}
\begin{equation}
\cX=(W^{1,1}_{\loc}([0,\infty),\R^n), \tau_\cX)
\end{equation}
with a suitable topology $\tau_\cX$.
To compute the $\Gamma$-limit, the chosen topologies on $\cU$ and $\cX$  must be strong enough to guarantee $\Gamma$-convergence to the sequence of joint functionals and weak enough to ensure compactness (precisely, equi-coerciveness of the joint functionals). Under suitable growth assumptions, a natural choice would be to play with weak (or weak*) topologies. If they are good enough for the control space, on the other hand, the weak topology of $W_{\loc}^{1,1}$ is too weak to pass to the limit in the state equations. 
In fact, it is known (see, e.g., \cite[Proposition 5.3.3]{Buttazzo89}) that a sufficient condition that allows to do that is the (local) uniform convergence of states. On the other hand, it is well known that the immersion $W^{1,1}_{\loc}\subset L_{\loc}^\infty$ is not compact and the weak convergence in $W_{\loc}^{1,1}$ does not imply (local) uniform convergence.   
A way to overcome this difficulty is to choose 
\begin{equation}
    \mbox{$\tau_\cX$ as the \emph{strong topology of} $L_{\loc}^\infty([0,\infty),\R^n)$}
    \end{equation}
\end{subequations}
(as done, for instance, in \cite{BC89} in the finite-horizon case) together with a suitable equi-coercivity assumption on the cost functionals that compensate the lack of compactness of the aforementioned immersion. 
Since the strong topology of $L_{\loc}^\infty([0,\infty),\R^n)$ is metrizable (see Appendix \ref{subsec_locSob})  then $\cX$ is a metric space. 
As done in the case of  $\cU$, also the convergence $\tau_\cX$ in  $\cX$ will be denoted by the symbol $\to$.




The choice of topologies on the state and control spaces confers to the set $\AI$ of admissible pairs (see \eqref{eq:FeasibleLinearized})  the following    $\Gamma$-closure property.

\begin{lemma}[$\G$-closure]\label{lemma:GammaCOnvergenceofSolutions} 
Under assumptions {\em(A0)} and  {\em (A1)} we have
\begin{enumerate}[label={\arabic*.},leftmargin=*]
\item for every $(u_k,x_k)\to (u,x)$ such that $(u_k,x_k)\in \AI$ for infinitely many $k\in\N$ we have $(u,x)\in \AI$;
\item  for every $(u,x)\in \AI$ and every  $u_k\to u$ in $\cU$ there exists $x_k \to x$ in $\cX$ such that $(u_k,x_k)\in \AI$ for all $k$ large enough.
\end{enumerate}
\end{lemma}

\begin{rem}
With the notation of double $\G$-limits (see~\cite{ButtDalMas82}), Lemma \ref{lemma:GammaCOnvergenceofSolutions} states that
\[
\chi_{\AI}=\Gamma_{\rm seq}(\cU,\,\cX^-)\lim_{k\to \infty}\chi_{\AI}.
\]
 The proof is a simple exercise  (see~\cite[Example 2.1]{ButtDalMas82}).
\end{rem}

\begin{proof}
To prove $\emph{1.}$, let us consider $(u_k,x_k)\to (u,x)$ such that $(u_k,x_k)\in \AI$ for infinitely many $k\in\N$. By definition of $\AI$, this means that there exists a subsequence (not relabeled) such that   $x_k=x^{u_k,x_0}$ for all $k\in \N$.
Moreover, the sequence $x_k:=x^{u_k,x_0}$ is converging to $x\in W^{1,1}_{\loc}([0,\infty),\R^n)$ strongly in $L^\infty_{\loc}([0,\infty),
\R^n)$, i.e., uniformly on compact sets. 
The claim will be proven by showing that  $x=x^{u,x_0}$.

To this aim it is convenient to write the Cauchy problem~\eqref{eq:CauchyProblem1} in the form
\begin{equation*}
\hspace{2.6cm}
\begin{cases}
x'(t)=f(t,x(t),u(t))\\
x(0)=x_0,
\end{cases}
\end{equation*}
where $f(t,x,u):=a(t,x)+b(t,x)u$ 
and observe that our assumptions  allow to pass to the limit in the state equation in the weak sense of distributions. 
 Indeed, for any test function $\varphi\in {\mathcal D}((0,\infty),\R^n)$  we have
\begin{equation}\label{eq:ConvDerivative}
 \begin{aligned}
&\int_0^\infty \varphi(t)\cdot\big(f(t,x_k(t),u_k(t))-f(t,x(t),u(t))\big)\,dt=\\
&\qquad=\int_0^\infty\varphi(t)\cdot\big(a(t,x_k(t))-a(t,x(t))\big)\,dt+\int_0^\infty \varphi(t)\cdot \big(b(t,x_k(t))-b(t,x(t)\big)u_k(t)\,dt+\\&\qquad\hspace{3ex}+\int_0^\infty\varphi(t)\cdot b(t,x(t))\big(u_k(t)-u(t)\big)\,dt.
 \end{aligned}
 \end{equation}
 Let $I$ be a bounded interval containing $\text{supp}\,{\varphi}$. 
 Since the sequence $x_k$  strongly converges (and it is thus bounded) in $L^\infty_{\loc}([0,\infty),\R^n)$, there exists a compact set $H\subset \R^n$ such that $x_k(t)\in H$ for all $k\in \N$ and all $t\in I$.
 About the first term on the  right hand side 
 in~\eqref{eq:ConvDerivative}, by Lipschitz continuity (assumption~\textnormal{(A0)})  we have
\[
\Big|\int_0^\infty\varphi(t)\cdot(a(t,x_k(t))-a(t,x(t)))\,dt\,\Big |\leq \|\varphi\|_\infty\int_I A_H(t)|x_k(t)-x(t)|dt\to 0\ \mbox{ as }k\to\infty,
\]
because $x_k\to x$ uniformly on $I$ 
and $A_H\in L^1_{\rm loc}([0,\infty))$. 
The second term is also converging to $0$, indeed,
 \[
\Big|\int_0^\infty \varphi(t) \cdot (b(t,x_k(t))-b(t,x(t))u_k(t)\,dt\Big|\leq  
\|\varphi\|_\infty\sup_{t\in I}|x_k(t)-x(t)|\int_I B_H(t)|u_k(t)| dt\to 0,
\]
since  $x_k\to x$ uniformly in $I$ and the integral over $I$ is bounded because $B_H\in L^{p'}_{\rm loc}([0,\infty))$, $(u_k)$ is bounded in $L^p((0,\infty),U)$  and, by H\"older's inequality, we have
$$
\int_I B_H(t)|u_k(t)| dt\le \|{B_H}_{\vert I}\|_{p'}\|{u_k}_{\vert I}\|_p.
$$
For the last term in~\eqref{eq:ConvDerivative}
let us first note that, by (1)\ of hypothesis~\textnormal{(A0)} and since $x(t)\in H$ for all $t\in I$,
there exists $\wt b\in L^{p'}(I)$ such that
\[
|b(t,x(t))|\leq \wt b(t)\;\;\;\forall \,t\in I.
\]
Then, we have
\[
\left |\int_0^\infty\varphi(t) \cdot b(t,x(t))(u_k(t)-u(t))\,dt\right |\leq\|\varphi\|_\infty\int_I\wt b(t) (u_k(t)-u(t))\,dt \to 0
\]
because $u_k\to u$ in $\cU$ that is, by the chosen topology, $u_k\weaks u$ in $L^p((0,\infty),U)$.

Since we have also $x_k(0)= x_0=x(0)$ for any $k\in \N$, 
 we have thus proved that $x$ is solution, in the sense of distributions, 
 to the Cauchy problem \eqref{eq:CauchyProblem1}. Since the latter admits a unique solution, we
 have $x=x^{u,x_0}$
 and the proof of {\it 1.}\ is concluded.

To prove \emph{2.}, let us consider $(x,u)\in \AI$, i.e., $x=x^{u,x_0}\in W^{1,1}_{\loc}([0,\infty),\R^n)$, a sequence $u_k\to u$ in $\cU$ and the corresponding sequence of local solutions   $x_k:=x^{u_k,x_0}$. 
 By assumption (A1), there exists $\overline k\in \N$ such that $x_k$ is global for $k\geq \overline k$, i.e., $x_k\in  W^{1,1}_{\loc}([0,\infty),\R^n)$ for all $k\geq \overline k$. Hence,  $(u_k,x_k)\in \AI$, for all $k\geq \overline k$. 
It remains to prove that $x_k\to x$  in $\cX$, i.e., uniformly on all compact intervals of the form $[0,T]$. 
Let us take a compact interval $I=[0,T]$ and  let $r_0:=\|x_{\vert_I}\|_\infty$.
For any $r>r_0$ and $k\geq \overline k$  we define
\[
t_k:=\sup\left\{t\in  I\;:\;|x^{u_k,x_0}(s)|\leq r\;\forall s\in [0,t]\right \}.
\]
Let us note that, by continuity, if $t_k< T$ then $|x_k(t_k)|=r$.
Let us show that, actually,  $t_k=T$ for all $k$ large enough. 
By definition, for any $t\leq t_k$ we have $x_k(t)\in H:=\overline{\B(0,r)}$. Thus,  using hypothesis~\textnormal{(A0)} 
as done in the proof of point \emph{1.}, 
we obtain
\[
|x_k(t)-x(t)|\leq \int_0^t (A_H(s)+B_H(s)|u_k(s)|\,)|x_k(s)-x(s)|\,ds+\Big|\int_0^t \wt b(s)\big(u_k(s)-u(s)\big)\,ds\Big|\quad \forall\,t\leq t_k, 
\]
with 
$A_H+B_H|u_k|$ bounded in $L^1(I)$  and $\wt b\in L^{p'}(I)$.
 Since $u_k\to u$ in $\cU$ (i.e., $u_k\weaks u$ in $L^p((0,\infty),U)$) we have 
\[
K_k:=
\Big|\int_0^t \wt b(s)\big(u_k(s)-u(s)\big)\,ds\Big|\to 0\quad\mbox{ as }k\to\infty. 
\]
On the other hand, using Grönwall's lemma  we have 
\begin{equation}\label{eq:GronwallBound}
|x_k(t)-x(t)|\leq 
MK_k\quad\forall\, t\leq t_k,
\end{equation}
with 
$$
M=\sup_{k\ge\overline k}{\rm e}^{\int_I (A_H+B_H|u_k|) \,ds}<\infty.
$$ We can thus choose $\overline k_1\in \N$ large enough such that
\[
|x_k(t)-x(t)|\leq \frac{r-r_0}{2} \quad\forall \,k\geq \overline k_1\;\;\;\forall\;t\leq t_k.
\]
This implies that
\[
|x_k(t_k)|\leq |x(t_k)|+|x_k(t_k)-x(t_k)|\leq r_0+\frac{r-r_0}{2}=\frac{r+r_0}{2}<r.
\]
By definition of $t_k$, this implies $t_k=T$  for all $k\geq \overline k_1$. Then~\eqref{eq:GronwallBound} implies that $x_k\to x$ uniformly in $L^\infty([0,T],\R^n)$, as required. By arbitrariness of $T>0$, recalling Proposition~\ref{prop_stchmp},  we have that $x_k\to x$ in $L_{\loc}^\infty([0,\infty),\R^n)$, i.e.,  $x_k\to x$ in $\cX$, which concludes also the proof of~\emph{2.}\ and of the lemma.  
\end{proof}

\section{Intrinsically coercive control systems}\label{sec:ParticularCases}

In proving  Lemma~\ref{lemma:GammaCOnvergenceofSolutions}, 
we have endowed $\cX=W^{1,1}_{\loc}([0,\infty),\R^n)$ with the strong topology of  $L^\infty_{\loc}([0,\infty),\R^n)$.
As already remarked, in general, this particular choice of the topology would require an additional equi-coercivity assumption  to guarantee sequential compactness of minimizers (see Definition~\ref{defn:EquiCoercivityGeneral} and Theorem~\ref{lemma:GammaCOnvMAINProp}), namely,  for every $0<T_k\to\infty$, 
\begin{equation}\label{eq:EquiCoercivty}
\cF_{T_k}(u_{T_k},x_{T_k})\le C\ \implies (u_{T_k},x_{T_k})\to(u,x)\mbox{ in }\cU\times \cX\mbox{ (up to a subsequence).}
\end{equation}
  In this section we illustrate particular yet remarkable cases in which the global existence assumption (A1) holds and, moreover, 
the convergence of the states in the strong $L^\infty_{\loc}$ sense holds independently of the cost functional, and thus,  without any additional assumption, \emph{coercivity w.r.t.\ $x$} holds as formally defined in the hypothesis of Theorem~\ref{MT:StateConstrPatternPReserving2}.

\subsection{Uniformly bounded systems}

Given a non-empty, convex and closed set $U\subseteq \R^m$, in this subsection we consider $\cU=L^\infty((0,\infty),U)$ (i.e., $p=\infty$) endowed with the weak$^\star$ topology. 
Let us introduce the following definition of uniform boundedness.
\begin{defn}\label{defn:UniformBoundedness}
Given $\varnothing\ne U\subseteq \R^m$ and $a,b:[0,\infty)\times \R^n\to \R^n$ satisfying hypothesis~\textnormal{(A0)},  system~\eqref{eq:CauchyProblem1} is said to be  \emph{uniformly globally bounded with respect to $U$} if for any  $x_0\in \R^n$ and any bounded set $\cB \subset L^\infty((0,\infty),U)$
there exists a bounded set $X_{\cB,x_0}\subset \R^n$ such that $x^{u,x_0}(t)\in X_{\cB,x_0}$ for every $u\in \cB$ and every $t\in \textit{\rm dom}(x^{u,x_0})$.  
When $U=\R^m$ we simply say that the system is \emph{uniformly globally bounded}.
\end{defn}

\begin{rem}
We note that, when the set $U\subset \R^m$ is bounded, this is equivalent to say that, for any $x_0\in \R^n$ there exists a bounded set $X_{x_0}\subset \R^n$ such that $x^{u,x_0}(t)\in X_{x_0}$  for every $u\in L^\infty((0,\infty),U)$ and every $t\in \text{dom}(x^{u,x_0})$.
\end{rem}
Uniform global boundedness ensures that for any initial condition $x_0\in \R^n$ and for any bounded set of control inputs (satisfying the constraint $u(t)\in U$ almost everywhere), the values of the corresponding solutions are confined in a bounded subset of $\R^n$.
Of course, if we are only interested in a specific initial condition $x_0\in \R^n$, the quantifier ``for any $x_0\in \R^n$'' can be avoided. The notion of boundedness in Definition~\ref{defn:UniformBoundedness} has been introduced in various contexts, possibly  with slightly different nomenclatures/def\-i\-ni\-tions,  see for example~\cite{SontagWang96},~\cite{Bacc98},~\cite[Definition 2.4]{Miro23} and references therein. Historically, it was also referred to as ``Lagrange stability'', see for example~\cite[Section 1.5]{BhatiaSzego}.

Moreover, in control theory, there is a well-known ``stability property'' related to Definition~\ref{defn:UniformBoundedness}, known as \emph{input-to-state stability} (shortly ISS), see~\cite{Sontag94} and the recent monograph~\cite{Miro23}.
The notion of ISS has been introduced in~\cite{Sont89}  in the context of stabilization problems. It is now considered a central property in the context of nonlinear control systems, also due to its elegant characterization in terms of existence of Lyapunov functions, as provided in~\cite{Sontag94}. In particular, the ISS property provides a sufficient condition (which in some cases is also necessary) for Definition~\ref{defn:UniformBoundedness}, as proved in~\cite[Section 2.5]{Miro23}. 

Let us also note the relations of Definition~\ref{defn:UniformBoundedness} with the notion of forward invariant set. Formally, a set $X\subset \R^n$ is said to be \emph{forward invariant (with respect to $U\subseteq\R^m$)}, if for any  $x_0\in X$  and any $u\in L^\infty((0,\infty),U)$ it holds that $x^{u,x_0}(t)\in X$ for all $t\in [0,\infty)$. It is now easy to see that if system~\eqref{eq:CauchyProblem1} admits a \emph{bounded} forward invariant set $C$  (with respect to $U\subseteq\R^m$), the condition introduced in Definition~\ref{defn:UniformBoundedness} is trivially satisfied for all $x_0\in C$. 

Definition~\ref{defn:UniformBoundedness} is particularly beneficial in our context since in certain cases it is a ``verifiable'' sufficient condition for assumption (A1)  and, moreover, when $u_k\to u$ in $\cU$ the corresponding solutions will converge   uniformly on compact sets, as formalized in the following lemma.

\begin{prop}\label{Lemma:LocallyLispchitzSol}
 Suppose that {\em(A0)} be satisfied and let  $q\in (1,\infty]$. If system~\eqref{eq:CauchyProblem1} is uniformly globally bounded w.r.t.\  a non-empty, closed and convex set $U\subseteq \R^m$,  then {\em(A1)} is satisfied.
Suppose further that hypothesis {\em(A0)} be satisfied in a stronger form with $M,N\in L_\loc^q([0,\infty))$ and $A_H,B_H\in L_\loc^q([0,\infty))$ for any compact set $H\subset \R^n$. Then the following property holds:
\begin{itemize}[leftmargin=*]
    \item[$\empty$] if $u_k\to u$ in $\cU$ (i.e., in $L^\infty((0,\infty),U)$ with its weak$^\star$ topology) then 
\[
x^{u_k,x_0}\to x^{u,x_0}\;\;\;\text{weakly in } W^{1,q}_{\loc}([0,\infty),\R^n),
\]
and, thus, in $\cX$ (i.e., strongly in $L^\infty_{\loc}([0,\infty),\R^n)$).
\end{itemize}
\end{prop}
\begin{proof}
Suppose that system~\eqref{eq:CauchyProblem1} be uniformly globally bounded w.r.t.\   $U$.  Consider any $x_0\in \R^n$ and any $u\in L^\infty((0,\infty),U)$. By Definition~\ref{defn:UniformBoundedness}, there exists a bounded set $X_{u,x_0}\subset \R^n$ such that $x^{u,x_0}(t)\in X_{u,x_0}$ for all $t\in [0,\infty)$, and it is thus defined on $[0,\infty)$. This implies that hypothesis~\textnormal{(A1)} holds.

Suppose now that hypothesis~\textnormal{(A0)} be satisfied with $M,N\in L_\loc^q([0,\infty))$ and $A_H,B_H\in L_\loc^q([0,\infty))$ for any compact set $H\subset \R^n$.
Consider any $u\in \cU$ and any sequence  $u_k\to u$ in $\cU$. Since, in particular, the sequence $u_k$ is bounded in $\cU=L^\infty((0,\infty),U)$, again by Definition~\ref{defn:UniformBoundedness}, there exists $H\subset \R^n$ bounded such that $x^{u_k,x_0}(t)\in H$, for all $k\in \N$ and all $t\in [0,\infty)$ (without loss of generality, we can suppose that $H$ be compact and $0\in H$).  This implies  that the sequence $x^{u_k,x_0}$ is bounded in $L^\infty((0,\infty),\R^n)$.
 Since (A0) is satisfied, it holds that
\begin{equation}\label{eq:BoundednessofDerivative}
\Big|\frac{d}{dt}x^{u_k,x_0}(t)\Big|
\leq |a(t,x^{u_k,x_0}(t))|+|b(t,x^{u_k,x_0}(t))||u_k(t)|\leq \widetilde M(t)
\end{equation}
with $\widetilde M(t)=M(t)+A_H(t)D+(N(t)+B_H(t)D)u_{M}$ where $D=\text{diam}(H)\geq 0$ and $u_M\geq 0$ is such that $\|u_k\|_\infty\leq u_M$ for all $k\in \N$.
Since we are supposing $A_H,B_H,M,N\in L_\loc^q([0,\infty))$ this implies that $\widetilde M\in L_\loc^q([0,\infty))$.

Hence,  the sequence $x^{u_k,x_0}$ is bounded in $\Wl^{1,q}([0,\infty),\R^n)$. Then, it admits a weakly$^\star$ converging subsequence $x_k\wto x$ in $\Wl^{1,q}([0,\infty),\R^n)$.
By the Uryshon's  property stated in Theorem~\ref{th_Ury_Wloc} it suffices to show that $x=x^{u,x_0}$. This equality can be obtained with the same arguments of the proof of \emph{1.}\ of Lemma~\ref{lemma:GammaCOnvergenceofSolutions} and it is thus not further developed here. The last statement directly follows by  Lemma~\ref{lem_ucbs}.
\end{proof}

\subsection{Linear control systems}
In this section we allow $p\in (1,\infty]$ and $\cU$ as in \eqref{defU}, as usual.
We consider \emph{non-autonomous linear control systems} and show that, under certain hypotheses, solutions corresponding to converging controls also converge in $\cX$ (see \eqref{defX}).

Given $A:[0,\infty)\to \R^{n\times n}$ and $B:[0,\infty)\to \R^{n\times m}$ Lebesgue measurable (matrix-valued) functions, we consider the linear system defined by
\begin{equation}\label{eq:LinearControlSystem}
\begin{cases}
x'(t)=A(t)x(t)+B(t)u(t),\\
x(0)=x_0\in \R^n.
\end{cases}
\end{equation}
Let us introduce the following standard definition.
\begin{defn}[state-transition matrix]
Given $A\in L^1_{\rm loc}([0,\infty),\R^{n\times n})$ and any $t_0\in [0,\infty)$ we define
$\Phi(\cdot, t_0):(0,\infty)\to \R^{n\times n}$ as the unique solution to the (matrix-valued) Cauchy problem
\[
\begin{cases}
 M'(t)=A(t)M(t),\\
M(t_0)=\text{I}_n,
\end{cases}
\]
where $I_n$ is the identity matrix of order $n$.
The function $\Phi:[0,\infty)\times [0,\infty)\to \R^{n\times n}$
is called the \emph{state-transition matrix} associated to $A$. 
\end{defn}
For an introduction to state-transition matrices for linear systems, we refer to~\cite[Chapter 4.6]{Khalil02} and references therein.

Let us suppose that $B\in L^{p'}_{\rm loc}([0,\infty),\R^{n\times m})$. Given any $u\in \cU$ and any $x_0\in \R^n$, the linear system~\eqref{eq:LinearControlSystem} admits a \emph{unique global solution}, given by
\begin{equation}\label{eq:SOlutionLinearsystems}
x^{u,x_0}(t)=\Phi(t,0)x_0+\int_0^t\Phi(t,s)B(s)u(s)\,ds,\quad \,t\in [0,\infty).
\end{equation}
The latter is also known as \emph{variation of constants/parameters formula}, see for example~\cite[Chapter~3]{Hale}.
 Therefore, (A1) is trivially satisfied.

Let us now study the convergence of solutions as $u_k\to u$ in $\cU$.

\begin{prop}\label{prop_bbs_lin}
Consider $A\in L_\loc^\infty([0,\infty),\R^{n\times n})$ and $B\in L_\loc^s([0,\infty),\R^{n\times m})$ for some $s\in (1,\infty]$. Let us suppose that the solution $q\in [0,\infty]$ of the equation
\begin{equation*}\label{eq:HolderConjugacy}
\frac{1}{p}+\frac{1}{s}=\frac{1}{q}
\end{equation*}
is such that $q>1$.
If $u_k\to u$ in $\cU$ then  
\[
x^{u_k,x_0}\to x^{u,x_0}\;\;\;\text{weakly$^\star$ in } W^{1,q}_{\loc}([0,\infty),\R^n),
\]
and, thus, in $\cX$ (i.e., strongly in $L^\infty_{\loc}([0,\infty),\R^n)$).
\end{prop}
\begin{proof}
First of all we note that, since $A\in L_\loc^\infty([0,\infty),\R^{n\times n})$, for every $T>0$ there exists $\lambda_T\geq 0$ such that $|A(t)|\leq \lambda_T$ for a.a.\ $t\in [0,T)$. This implies, by   Grönwall's  inequality (see for example~\cite[Lemma 2.7]{Teschl12}) that for every $T>0$ we have
\begin{equation}\label{eq:BoundTransitionMatrix}
|\Phi(t,s)|\leq e^{\lambda_T(t-s)},\;\;\forall s\leq t\leq T.
\end{equation}
We explicitly develop the case $p,s,q<\infty$: the remaining cases are similar and left to the reader.
Consider any $u\in \cU$, any $x_0\in \R^n$ and any $T>0$.  Using~\eqref{eq:SOlutionLinearsystems} we have
\[
\int_0^T |x^{u,x_0}(t)|^q\,dt\leq \int_0^T|\Phi(t,0)x_0|^q\;dt+\int_0^T \int_0^t|\Phi(t,s)B(s)u(s)|^q\,ds.
\]
By~\eqref{eq:BoundTransitionMatrix}, the first term on the right-hand side is  bounded by $\frac{|x_0|^q}{q\lambda_T}(e^{q\lambda_T T}-1)$. For the second one we note that
\begin{equation}\label{eq:Bound0LinearProof}
\begin{aligned}
\int_0^T \int_0^t |\Phi(t,s)B(s)u(s)|^q\,ds\,dt&\leq  \int_0^T \int_0^te^{q\lambda_T (t-s)}|B(s)u(s)|^q\,ds\;dt\\&\leq   \int_0^T e^{q\lambda_Tt} \int_0^T|B(s)|^q|u(s)|^q\,ds\,dt\\&\leq \Lambda \int_0^T |B(t)|^q \,|u(t)|^q\,dt
\end{aligned}
\end{equation}
with $\Lambda=\int_0^T e^{q\lambda_T  t}\,dt$. 
Now, by H\"older's inequality with conjugate  exponents $\frac{s}{q}$ and $\frac{p}{q}$, we have 
\[
\int_0^T |B(t)|^q|u(t)|^q\,dt\leq \left(\int_0^T|B(t)|^s\,dt\right)^{\frac{q}{s}} \left(\int_0^T|u(t)|^p\,dt\right)^{\frac{q}{p}}.
\]
Since $B\in L_\loc^s([0,\infty),\R^{n\times m})$ we have thus proven that, for any $T>0$ there exist $M_T,N_T\geq 0$ such that
\begin{equation}\label{eq:Bound1LinearProof}
\|(x^{u,x_0})_{\vert (0,T)}\|_q^q\leq M_T\|u\|^q_p+N_T|x_0|^q\;\;\;\forall u\in L^p((0,\infty),U).
\end{equation}
 Moreover, since
\[
\frac{d}{dt}x^{u,x_0}(t)=A(t)x^{u,x_0}(t)+B(t)u(t),
\]
given any $T>0$ we have 
\begin{equation}\label{eq:Bound2LinearProof}
\begin{aligned}
\|(x^{u,x_0})'_{\vert (0,T)}\|_q^q&=\int_0^T \Big|\frac{d}{dt}x^{u,x_0}(t)\Big|^q dt\leq 
 \|A_{\vert (0,T)}\|^q_\infty\|x^{u,x_0}_{\vert (0,T)}\|^q_q+\int_0^T |B(t)|^q |u(t)|^q\,dt
\\&\leq \|A_{\vert (0,T)}\|^q_\infty\|x^{u,x_0}_{\vert (0,T)}\|^q_q+\|B_{\vert (0,T)}\|_s^q\|u\|^q_p,
\end{aligned}
\end{equation}
for all $u\in \cU$.

Consider now a converging sequence $u_k\to u$ in $\cU$, which is in particular bounded. By~\eqref{eq:Bound1LinearProof} and~\eqref{eq:Bound2LinearProof} we directly have that the sequence $x^{u_k,x_0}$ is bounded in $\Wl^{1,q}([0,\infty),\R^n)$.  Then, it admits a  weakly* converging subsequence $x_k\wto x$ in $\Wl^{1,q}([0,\infty),\R^n)$.
 By the Uryshon's  property stated in Theorem~\ref{th_Ury_Wloc} the claim follows by showing that  $x=x^{u,x_0}$. This can be done by using the same arguments of the proof of  \emph{1.}\ of Lemma~\ref{lemma:GammaCOnvergenceofSolutions}.  The application of  Lemma~\ref{lem_ucbs} concludes the proof.  
\end{proof}
\begin{rem}
Optimal control for linear models as in~\eqref{eq:LinearControlSystem} is a classical topic in  control theory. Usually, such state equation is coupled with a quadratic cost functional of the form
\begin{equation}\label{eq:LQRCost}
\cJ_T(x,u)=\int_0^T x^\top(t)Q(t)x(t)+u^\top(t)R(t)u(t)\,dt
\end{equation}
with measurable symmetric matrix-valued functions $Q:[0,\infty)\to \R^{n\times n}$, $R:[0,\infty)\to \R^{m\times m}$ satisfying uniform positive-definiteness conditions, i.e.
\[
a_1 I_n\preceq Q(t)\preceq a_2I_n\;\;\text{ and }\;\;a_1 I_m\preceq R(t)\preceq a_2I_m\;\;\text{for almost all }t\ge0,
\]
for some $0<a_1<a_2$. Such problem is also known as \emph{linear-quadratic regulator} (LQR) problem, see \cite[Chapter 6]{Liberzon12} and references therein. With cost functionals as in~\eqref{eq:LQRCost}, it is natural to consider, in the hypothesis of Proposition~\ref{prop_bbs_lin}, $s=\infty$ and $p=q=2$. 
\end{rem}

\section{Optimal control problems with increasing time horizons}\label{Sec:StateConstr}

This 
section is basically devoted to prove our main results, Theorem \ref{MT:StateConstrPatternPReserving2} and its Corollary \ref{MT:StateConstrPatternPReserving2SC}. The road map for the proof is tracked by the Pattern Preservation Theorem \ref{prop:COnvergenceofOptima}. Following it, besides ensuring compactness (i.e., equi-coercivity),  we have to prove a suitable 
$\Gamma$-convergence result. Since the latter is interesting also by itself as well, we prefer to present it in a  separated  statement.

Let us, then, consider  (sequences of) general 
optimal control problems  as defined  and under the as\-sump\-tions of Section \ref{sec:MainResults}. 
In particular, they may involve \emph{state constraints}.
Just to exemplify, it can be required  that the state trajectory  satisfy   a constraint of the form $x\in X$  with $X$ a closed and proper subset of $\R^n$. Constraints of this kind arise naturally in many applications in which the system's state must remain within a \emph{feasible region}, in order to satisfy safety, physical, or regulatory limits, see the overview in~\cite{Frankowska2010}. Such viability requirement is natural in control of population/epidemic dynamics, as we will present by  an example in the subsequent Section~\ref{sec:Examples}. 
As already remarked in 
Section \ref{sec:MainResults} (see \eqref{mr:sc}), 
the aforementioned state constraints, when incorporated in the cost functional,   
results in an additive  contribution of the form $\chi_{X}(x)$ that is, at most, lower semicontinuous. This fact 
causes a lack of continuity which prevents 
a direct application of the $\G$-convergence results of \cite{F2000} (see also \cite{BF98}).

Let us recall that the control space $\cU=L^p((0,\infty),U)$  with  $p\in(1,\infty]$ (where $U=\R^m$ if  $p<\infty$ and $U$
     a non-empty, closed and convex subset of  $\R^m$ if  $p=\infty$), is  endowed by  the weak$^\star$ topology, while  the space of states $\cX=\Wl^{1,1}([0,\infty),\R^n)$ is equipped with  the strong topology of $L^\infty_{\loc}([0,\infty),\R^n)$.
We now re-cast the constrained optimal control problem 
stated   in~\eqref{eq:OptimalControlProblem} in a form more suitable to our purposes, considering the problems
\begin{equation}\label{eq:AbstractOptContrProb00}
\begin{aligned}
\begin{array}{c}
\displaystyle\inf
\cJ_T(u,x)\\[-1ex]
\ \\[-1ex]
\text{subject to}\\[-1ex]
\ \\[-1ex]
(u,x)\in \cA,
\end{array}
\end{aligned}
\end{equation}
with $\cA:=\{(u,x)\in \cU\times\cX\;\vert\;x=x^{u,x_0}\}$ as before, while 
\begin{equation}\label{eq_coerick}
  \cJ_T(u,x):=\int_0^T \ell(t,x(t),u(t))\,dt+\begin{cases}\displaystyle \int_T^\infty |u(t)|^p\,dt&\text{if $p<\infty$},\\[2ex]
\displaystyle{\int_0^\infty \chi_U(u(t))\,dt} &\text{if $p=\infty$}.
\end{cases}
\end{equation}
For every $T>0$ (possibly $T=\infty$, with the convention $\int_\infty^\infty=0$) we consider the joint functional $ \cF_T:\cU\times\cX\to [0,+\infty]$  defined by
\begin{equation}\label{eq:JointFunctionalStateConstr}
\cF_T=\cJ_T+\chi_\cA.
\end{equation}
It is worth noting that problems~\eqref{eq:OptimalControlProblem} and~\eqref{eq:AbstractOptContrProb00} are equivalent. This is obvious in the case $p=\infty$ because 
the additional term $\int_0^\infty \chi_U(u(t))\,dt$ is $0$ and plays only a formal role. In the case $p<\infty$,  
if $(u,x)\in\cU\times\cX$ is a feasible (resp. optimal) pair for~\eqref{eq:OptimalControlProblem}, it suffices to consider $(\wt u,\wt x)\in \cU\times \cX$ defined by 
\[
\wt u(t):=\begin{cases}
u(t)\;\;\;&\text{if }t\leq T,\\
0 \;\;\;&\text{if }t> T,
\end{cases}
\]
and $\wt x=x^{\wt u,x_0}$. It then holds that $x(t)=\wt x(t)$ for all $t\leq T$ and
it is then easy to see that $(\wt x,\wt u)$ is again a feasible (resp. optimal) pair for~\eqref{eq:OptimalControlProblem} as well as a feasible (resp. optimal) pair for~\eqref{eq:AbstractOptContrProb00}.
Conversely, if $(u,x)\in \cU\times\cX$ is feasible (resp. optimal) for~\eqref{eq:AbstractOptContrProb00} it is clearly  feasible (resp. optimal) also for~\eqref{eq:OptimalControlProblem}.

\begin{rem}\label{rem_efc}
In the case $p<\infty$, the equivalent formulation \eqref{eq_coerick} has the advantage that  if the integrand $\ell_2$ satisfies a polynomial growth condition of order $p$ from below, then the same holds for the integrand on the whole infinite horizon. This would ensure equi-coercivity w.r.t.\ $u$. Of course, a similar trick does not work to guarantee coercivity also w.r.t.\ $x$. Indeed, an integral contribution on $(T,\infty)$ involving the state variable would lead to a different problem (in general, not equivalent to the original one).         
\end{rem}


Before providing 
the first result
of this section,
we state a useful lemma.
\begin{lemma}[tails replacement]\label{lemma:Useful_L1}
 Let us consider $u\in \cU$, $u_k \to u$ in $\cU$ and any sequence of increasing positive numbers $T_k\to \infty$. 
Given any  sequence $w_k$ bounded in $\cU$ and the sequence $\widetilde u_k\in \cU$ defined by
\[
\widetilde u_k(t)=\begin{cases}
u_k(t)\;\;\;\;\;&\text{if } t\leq T_k,\\
w_k(t)\;\;\;\;\;\;&\text{if } t> T_k,
\end{cases}
\]
we have that $\widetilde u_k\to u$ in $\cU$.
\end{lemma}

\begin{proof} Recalling that $\cU$ is endowed with the weak$^\star$ topology, and  weakly$^\star$ converging sequences are bounded, we have that the sequence $u_k$ is bounded in $L^p((0,\infty),\R^m)$. Moreover, since  the sequence $w_k$ is by assumption bounded, also the sequence $\widetilde u_k$ is bounded.
Then, there exists a subsequence of $\widetilde u_k$ (not relabeled) that converges to some $v$ in $\cU$. On the other hand, by taking a test function $\varphi\in\cD((0,\infty),\R^m)$, by compactness of the support of $\varphi$, for every $k$ large enough  we have 
$$
\int_0^{\infty}\widetilde u_k\cdot\varphi\,dt=\int_0^{\infty}u_k\cdot\varphi\,dt\to \int_0^{\infty}u\cdot\varphi\,dt.
$$
Thus $\widetilde u_k\wto u$ in $\cD'((0,\infty),\R^m)$ and, by uniqueness of the limit, we obtain $v=u$. Since on bounded sets the weak$^\star$ topology is metrizable, this implies that the whole sequence $\widetilde u_k$ weakly$^\star$ converges to $u$, as claimed.
\end{proof}


We are now in position to state the main results of this section under the assumptions of  Section~\ref{sec:MainResults}. Classically, the strategy of the proof consists in
checking the \emph{liminf inequality} and \emph{recovery sequence} conditions  
of Remark~\ref{rem_lirs}.

 \begin{thm}[$\G$-convergence]\label{thm:MainTheorem} 
Under assumptions \emph{(A0)}, \emph{(A1)}, \emph{(a)} and \emph{(b)} of Section \ref{sec:MainResults}, we have 
\[
\G_{\rm seq}^-(\cU\times\cX)\lim_{T\to\infty}\cF_{T}=\cF_\infty,
\]
where $\cF_T$ and $\cF_\infty$ are the joint functionals introduced in~\eqref{eq:JointFunctionalStateConstr}.
\end{thm}

\begin{proof} Let us prove that (see Remark \ref{rem_GammaT}), for any sequence $0< T_k\to \infty$,  we have 
\[
\G_{\rm seq}^-(\cU\times\cX)\lim_{h\to\infty}\cF_{T_h}=\cF_\infty.
\]
To this aim, let us start by proving the liminf inequality. 
Consider a sequence  $u_k\to u$ in $\cU$ and $x_k\to x$ in $\cX$ (i.e., strongly in $L^\infty_{\loc}([0,\infty),\R^n)$). Without loss of generality, we can suppose that 
$$
\liminf_{k\to\infty}\cF_{T_k}(u_k,x_k)<+\infty
$$
since, otherwise, there is nothing to prove. 
Thus,  possibly passing to a subsequence, we can suppose that the sequence $\cF_{T_k}(u_k,x_k)$ be bounded. 
In particular, by definition of $\cF_{T_k}=\cJ_{T_k}+\chi_\cA$, we have that 
\begin{equation}\label{eq:condition}
(u_{k},x_k)\in \cA,\, \text{i.e., }x_{k}=x^{u_{k},x_0},\quad\forall \,t\in (0,T_k),
\end{equation}
for all $k\in \N$. In the case $p=\infty$, we also have $u_k(t)\in U$ for all $k\in \N$ and almost all  $t\ge0$.

Under our standing assumptions, assertion \emph{1.}\ of  Lemma~\ref{lemma:GammaCOnvergenceofSolutions} implies that $x_k=x^{u_k,x_0}\to x^{u,x_0}$ in $L^\infty_{\loc}([0,\infty),\R^n)$. By uniqueness of the limit we have $x^{u,x_0}=x$, i.e., $(u,x)\in \cA$. 
Summarizing, we have 
\begin{equation}\label{eq:COnvegencIndicatrix}
\chi_{\cA}(u,x)=\chi_{\cA}(u_k,x_k)=0\quad\forall \,k\in \N.
\end{equation}

Let us now focus on the cost functionals. Let us recall that by hypothesis \textnormal{(a)} we have $\ell(t,x,u)=\ell_1(t,x)+\ell_2(t,x,u)$, where $\ell_1$ is a non-negative normal integrand, while $\ell_2$ is a non-negative normal convex integrand. Thus, for any $T>0$ (and also in the case $T=\infty$) let us consider the functionals  $\cH_T,\cI_T:\cU\times\cX\to [0,+\infty]$ defined by  
\begin{equation*}
\cH_T(u,x):=\int_0^T \ell_1(t,x(t))\,dt\;\text{ and }\;\cI_T(u,x):=\int_0^T\ell_2(t,x(t),u(t))\,dt.
\end{equation*}
About the sequence $\cH_{T_k}$, 
the application of Fatou's Lemma gives
\[
\liminf_{k\to \infty}\cH_{T_k}(u_k,x_k)=\liminf_{k\to \infty}\int_{0}^\infty \ell_1(t,x_k(t))\mathbf{1}_{[0,T_k]}(t)\,dt\geq \int_{0}^\infty \liminf_{k\to\infty}\ell_1(t,x_k(t))\mathbf{1}_{[0,T_k]}(t)\,dt.
\]
Since $x_k\to x$ pointwisely and $\ell_1(t,\cdot)$ is lower semicontinuous, we have 
\[
\liminf_{k\to\infty}\ell_1(t,x_k(t))\mathbf{1}_{[0,T_k]}(t)\geq \ell_1(t,x(t))\text{ for a.a.}\ t\ge0.
\]
Thus, summarizing, we have proved that
\begin{equation}\label{eq:LimInfFUncFirstPart}
\liminf_{k\to \infty}\cH_{T_k}(u_k,x_k)\geq \int_0^\infty \ell_1(t,x(t))\,dt=\cH_\infty(u,x).
\end{equation}
Let us now consider the sequence $\cI_{T_h}$. 
By De Giorgi and Ioffe's Semicontinuity Theorem (see for instance \cite[Theorem 7.5]{FL07} or \cite[Section 2.3]{Buttazzo89}), for 
every $T\in(0,+\infty)$ the functional 
$\cI_T:  \cU\times\cX\to[0,\infty]$ 
is  lower semicontinuous with respect to the chosen topologies, because  $\ell_2$ is a normal convex integrand. Moreover, also $ \cI_\infty$ is  lower semicontinuous. Indeed, since the integrand is non-negative we have
\begin{equation*}
\int_0^{\infty}\ell_2(t, x(t),u(t))\,dt=\sup_{T>0}\int_0^{T}\ell_2(t,x(t),u(t))\,dt,
\end{equation*}
and  the supremum  of any collection of lower semicontinuous functionals is lower semicontinuous.

Let us now use  the greedy strategy $u_g$ introduced in hypothesis~\textnormal{(b)} to modify the sequence $u_k$ 
as follows:
\[
\wt u_k(t)=\begin{cases}
u_k(t)&\text{if }0< t\leq T_k,\\
u_g(t,x_k(t))&\text{if }t> T_k.\\
\end{cases}
\]
Since $u_g:(0,\infty)\times \R^n\to \R^m$ is Borel measurable and $x_k$ is continuous, then $\wt u_k:(0,\infty)\to\R^m$ is Lebesgue measurable for any $k\in \N$. 
Moreover,  by the local boundedness of $u_g$ (hypothesis~\textnormal{(b1)}) and since $x_k$ are bounded in $L^\infty_{\loc}([0,\infty),\R^n)$, we have that $u_g(\cdot,x_k(\cdot))$ is bounded in $L^p((0,\infty),\R^m)$ in the case $p<\infty$, and  takes values in $U$ in the case $p=\infty$ (by hypothesis~\textnormal{(b2)}) . We can thus apply Lemma~\ref{lemma:Useful_L1}, obtaining $\wt u_k\to u$ in $\cU$. Since, by hypothesis~\textnormal{(b)}, $\ell_2(t,x,u_g(t,x))=0$ for almost all $t>0$ and for all $x\in \R^n$, by the aforementioned lower semicontinuity of $\cI_\infty$  we have
\begin{equation}\label{eq:LimInfFUncSecondPart}
\begin{aligned}
\liminf_{k\to \infty}\cI_{T_k}(u_k,x_k)&=\liminf_{k\to \infty}\int_0^{T_k} \ell_2(t,x_k(t),u_k(t))\,dt=\liminf_{k\to \infty}\int_0^\infty \ell_2(t,x_k(t), \wt u_k(t))\\
&\geq\int_0^\infty\ell_2(t,x(t), u(t))=\cI_\infty(u,x).
\end{aligned}
\end{equation}
By collecting~\eqref{eq:COnvegencIndicatrix},~\eqref{eq:LimInfFUncFirstPart} and~\eqref{eq:LimInfFUncSecondPart} and using the superadditivity of the limit inferior, we obtain
\[
\begin{aligned}
 \cF_\infty(u,x)&=\cH_\infty(u,x)+\cI_\infty(u,x)\\&\leq \liminf_{k\to \infty}\cH_{T_k}(u_k,x_k)+ \liminf_{k\to \infty}\cI_{T_k}(u_k,x_k) \\&\leq \liminf_{k\to \infty}\left (\cH_{T_k}(u_k,x_k)+ \cI_{T_k}(u_k,x_k)\right)\le\liminf_{k\to \infty} \cF_{T_k}(u_k,x_k),
\end{aligned}
\]
where the last inequality follows by the non-negativity of the additional integral term in \eqref{eq_coerick}. 
This concludes the proof of the liminf inequality.

We now prove the existence of a recovery sequence, i.e., given $(u,x)\in \cU\times \cX$ we have to show that there exists a sequence $(u_k,x_k)\to (u,x)$ in $\cU\times\cX$ such that  $\dis \cF_\infty(u,x)\ge \limsup_{k\to \infty} \cF_k(u_k,x_k)$. We can suppose that $\cF_\infty(u,x)<\infty$, otherwise there is nothing to prove. This implies $(u,x)\in \cA$, i.e., $x=x^{u,x_0}$ and moreover that  $u(t)\in U$ for almost all $t>0$ in the case $p=\infty$.
Now, let us consider the constant sequence $(u_k,x_k)\equiv (u,x)$.  By non-negativity of $\ell_1$ and $\ell_2$ and using the fact that, in the case $p<\infty$, \begin{equation}\label{eq:ConvControlProof1}
 \lim_{k\to \infty}\int_{T_k}^\infty |u(t)|^p \,dt=0,
 \end{equation}
we have
\[
\begin{aligned}
\limsup_{k\to \infty} \cF_{T_k}(u,x)&=\limsup_{k\to \infty}\int_0^{T_k}\ell_1(t,x(t))+\ell_2(t,x(t),u(t))\,dt\\&\leq  \int_0^\infty\ell_1(t,x(t))+\ell_2(t,x(t),u(t))\,dt=  \cF_\infty(u,x),
\end{aligned}
\]
concluding the proof.
    \end{proof}

    \begin{rem}
 The assumption of {\em existence of a  bounded greedy control strategy}, introduced in Section~\ref{sec:MainResults} (point~\textnormal{(b)}) and used in Theorem~\ref{thm:MainTheorem},  imposes a particular structure on the running cost function $\ell$, besides its non-negativity.
A sufficient condition for the validity of \textnormal{(b1)} is that
 there exists $\alpha\in L^p(0,\infty)$  such that
$$
|u_g(t,x)|\le\alpha(t)\quad \forall\, x\in \R^n \mbox{ and for a.a.}\ t>0.
$$
As a remarkable example, one can consider an arbitrary normal integrand $\ell_1:[0,\infty)\times \R^n\to [0,\infty]$ (possibly including also an additive term $\chi_X(x)$ with $X$ a closed subset of $\R^n$) 
and a normal convex integrand $\ell_2:[0,\infty)\times \R^m\to [0,\infty]$ such that, if $p\in (1,\infty)$,
\[
\ell_2(t,0)=0,\;\;\;\text{ for a.a.}\ t\ge0.
\]
In this case, condition \emph{(b)} is satisfied by taking $u_g\equiv0$. 

If $p=\infty$,  we can instead  suppose that there exists $u_\star\in U$ such that 
\[
\ell_2(t,u_\star)=0,\;\;\;\text{ for a.a.}\ t\ge0,
\]
and define $u_g\equiv u_\star$ to satisfy condition~\textnormal{(b2)}.
    \end{rem}

We are now in a position to prove Theorem~\ref{MT:StateConstrPatternPReserving2}.

\

\noindent {\em Proof of Theorem~\ref{MT:StateConstrPatternPReserving2}.} 
The proof is an application of the Pattern Preservation Theorem \ref{prop:COnvergenceofOptima}, whose  $\Gamma$-convergence assumption {\em(2)} is satisfied by the $\Gamma$-convergence Theorem \ref{thm:MainTheorem}. So, it remains just to prove that also the equi-coercivity hypotheses {\em(1)} is fulfilled for the sequence  $T_k\to\infty$ for which the standing assumption of {\em coercivity w.r.t.\ x}  holds.

To this aim, let us consider a sequence  $(u_k,x_k)\in \cU\times \cX$ such that 
\begin{equation}\label{eq_FTkmC} \cF_{T_k}(u_k,x_k)\leq C
\end{equation}
for some $C\geq 0$. We first claim that, up to a subsequence, there exists a $u\in \cU$ such that $u_k\to u$ in $\cU$.

In the case $p=\infty$, 
$u_k(t)\in U$ for all $k\in \N$ and a.e.\  $t>0$. Since $U$ is compact and convex by hypothesis (c2) of Section~\ref{sec:MainResults}, then  $u_k\to u$ in $\cU$ (up to a subsequence), for a certain $u\in \cU$.
Let us now suppose that $p\in (1,+\infty)$ and condition~\eqref{eq:CoercivitycostInegra} holds.  Since $\cF_{T_k}(u_k,x_k)\leq C$, we have 
\[
\begin{aligned}
C\geq \cJ_{T_k}(u_k,x_k)&= \int_0^{T_k}\ell_2(t,x_k(t),u_k(t))\,dt+\int_{T_k}^\infty |u_k(t)|^p\,dt\\&\geq \min\{1,\alpha\} \int_0^\infty |u_k(t)|^p \,dt-\int_0^\infty|\gamma(t)|\,dt=\min\{1,\alpha\}\|u_k\|_p^p -\|\gamma\|_1,
\end{aligned}
\]
which implies that the $u_k$ are bounded in $\cU$, and thus, up to a subsequence, there exists $u\in \cU$ such that $u_k\to u$ in $\cU$, and the claim is proven.
By \emph{coercivity w.r.t.\ $x$} 
it follows that also  $x^{u_k,x_0}$ admits a subsequence converging in $\cX$. According to Definition~\ref{defn:EquiCoercivityGeneral}, we have then proved that the sequence   $\cF_{T_k}$ is sequentially equi-coercive and the thesis follows by the Pattern Preservation Theorem \ref{prop:COnvergenceofOptima} . \qed

\

As already explained in Remark \ref{rem_efc}, the condition of \emph{coercivity w.r.t.\ $x$}  stated in Theorem~\ref{MT:StateConstrPatternPReserving2} cannot be ensured by imposing a growth condition like (c1) with respect to the state variable. Nevertheless, such  coercivity  can be forced by prescribing a condition like (c2) on the state, that is a state constraint  $x(\cdot)\in X$ with $X$ compact. Since, as already noted, such state constraint can be incorporated in the cost functional, the compactness hypotheses on $X$ can be seen as a particular case in which the \emph{coercivity w.r.t.\ $x$}
condition   holds. This is shown in  Corollary \ref{MT:StateConstrPatternPReserving2SC} that we are going to prove. 
 Also the specific cases presented in Section~\ref{sec:ParticularCases} satisfy the coercivity condition but, there, the  coercivity w.r.t.\ $x$ is ``intrinsic", i.e., it is satisfied \emph{independently of the cost functional}. In those cases, it is the structure of the state equation itself that provides the required coercivity with respect to the state.

\

\noindent {\em Proof of Corollary \ref{MT:StateConstrPatternPReserving2SC}.} 
 We have only to prove that for any sequence  $0<T_k\to \infty$, the condition of {\em coercivity w.r.t.\ x}   of Theorem \ref{MT:StateConstrPatternPReserving2} is satisfied. 
 
 Let $(u_k,x_k)\in \cU\times \cX$ such that 
$\cF_{T_k}(u_k,x_k)\leq C
$ 
for some $C\geq 0$.  
Let $u_k\to u$ in $\cU$. Since  $x_k=x^{u_k,x_0}$ belongs to the compact state-constraint set $X\subset \R^n$ for any $k\in \N$ (otherwise $\cF_{T_k}(u_k,x_k)$ would be not finite), we have that the sequence $x_k$ is bounded in $L^\infty((0,\infty),\R^n)$. 
Now, using hypothesis~\textnormal{(A0)} we have
\begin{equation}\label{eq:DerivativeProveConvergence}
\left|x'_k(t)\right|\leq |a(t,x_k(t))|+|b(t,x_k(t))||u_k(t)|\leq M(t)+A_{H}(t)D+(N(t)+B_H(t)D)|u_k(t)|
\end{equation}
where $H=X\cup\{0\}$ and  $D=\text{diam}(H)$. 
By using Holder's inequality and the boundedness of $u_k$ in $L^p((0,\infty),\R^m)$, our standing assumptions  imply that $x_k'$ is bounded in $L^q_{\loc}([0,\infty),\R^n)$.
We have thus shown  that the sequence $x_k$ is bounded in $\Wl^{1,q}([0,\infty),\R^n)$, and it thus admits a weakly$^\star$ converging subsequence $x_k\weaks x$ in $\Wl^{1,q}([0,\infty),\R^n)$. Then, by  Lemma~\ref{lem_ucbs}, this implies that $x_k\to x$ in $\cX$. Hence the claimed coercivity. The thesis follows then by applying Theorem  \ref{MT:StateConstrPatternPReserving2}.  \qed



\begin{rem} 
By assumption, the running cost $\ell_2$ satisfies a coercivity assumption with respect to the control variable $u$ (see (c) of Section \ref{sec:MainResults}). Corollary \ref{MT:StateConstrPatternPReserving2SC} shows an example of a class of optimal control problems in which also the additional {\em coercivity w.r.t.\ $x$} (see Theorem \ref{MT:StateConstrPatternPReserving2})   is satisfied. 
Other examples satisfying an even stronger condition are the intrinsically coercive systems (uniformly bounded systems and linear control systems in particular) discussed in  
Section~\ref{sec:ParticularCases}.
\end{rem}




\section{Applications and examples}\label{sec:Examples}
In this section we apply the $\Gamma$-convergence results obtained in the  previous one to specific optimal control problems arising from engineering/biological systems.
\subsection{Optimal switching control problems}
The framework of \emph{switched dynamical systems}
provides a mathematical model for a large class of physical/engineering  phenomena and has been a topic of intense research in the past and recent years; we refer to the monograph~\cite{Lib03,ChitMasSig25} for an overview.
In this framework, given $M$ vector fields $f_1,\dots, f_M:\R^n\to \R^n$, one considers the differential equation
\begin{equation}\label{eq:SwitchIntro}
\dot x(t)=f_{u(t)}(x(t)),
\end{equation}
where $u:(0,\infty)\to \{1,\dots,M\}$ is a so-called \emph{switching signal} selecting, at each instant of time, one subsystem among $f_1,\dots, f_M$ to be ``active''. 

In this subsection we focus on the case of switched systems composed by two \emph{linear subsystems}, and our aim is to characterize the optimal switching policy (with respect to a quadratic cost) in finite and infinite horizon.
More precisely, let us consider $A_1,A_2\in \R^{n\times n}$. For any $x_0\in \R^n$ and any $T\in [0,\infty]$ we want to study the optimal control problem  defined by
\begin{equation}\label{eq:OptimalControlSwitched}
\begin{aligned}
\min_{u\in \cU} &\,J_T(u,x)=\frac{1}{2}\int_0^{T}|x(t)|^2 \,dt,\\
&\begin{cases}
x'(t)=u(t)A_1x(t)+(1-u(t))A_2x(t),\\
x(0)=x_0\in \R^n,
\end{cases}
\end{aligned}
\end{equation}
where the control space is $\cU=L^\infty((0,\infty), [0,1])$, i.e., measurable and essentially bounded functions taking values in the compact and convex set $[0,1]$.  

In the case that $u(t)\in \{0,1\}$ for almost all $t\in[0,T]$, this corresponds to finding the optimal \emph{switching signal} in order to minimize the cost functional $J_T$, i.e., the ``energy'' of the system on the interval $[0,T]$. On the other hand, if $u(t)\in (0,1)$ 
in a non-trivial interval, this corresponds to the fact that the control imposes to follow a \emph{convex combination} of the vector fields defined by $A_1,A_2$, and this might be undesirable in the case of digital/quantized controls, in which only the extreme values $\{0,1\}$  are applicable for physical reasons. With this motivation in mind, in what follows we provide conditions ensuring that the optimal control is bang-bang, i.e., taking values in the discrete set $\{0,1\}$. \\
Defining $B_1=A_2$ and $B_2=A_1-A_2$, and taking again the state space $\cX=W^{1,1}_{\loc}([0,\infty),\R^n)$, we consider the following more  effective rewriting of problem~\eqref{eq:OptimalControlSwitched}  which consists in minimizing  the functional $\mathcal{F}_T:=\cJ_T+\chi_{\cA_{\rm sw}}:\cU\times \cX\to [0,\infty]$  where
\begin{subequations}\label{eq:RelaxedProblem}
\begin{align}
\cJ_T(u,x)&:=\frac{1}{2}\int_0^{T}|x(t)|^2 \,dt\\
\cA_{\rm sw}&:=\{(u,x)\in \cU\times \cX\;:\;x=x^{u,x_0}\}\\
&\ \text{where $x^{u,x_0}$ is solution to the Cauchy problem }\nonumber\\
&\ \begin{cases}
x'(t)=B_1x(t)+u(t)B_2x(t)\\ \label{eq:StateEQ}
x(0)=x_0\in \R^n.
\end{cases}
\end{align}
\end{subequations}
It is easy to see that the state equation~\eqref{eq:StateEQ} satisfies the basic conditions stated in hypothesis~\textnormal{(A0) in Section~\ref{sec:MainResults}}.
Existence of optimal controls for the problem~\eqref{eq:RelaxedProblem} in the case $T<\infty$ follows by classical results, see for example~\cite[Theorem 5.2.1]{BressanPiccoli}.
We want to provide necessary conditions that the optimal controls have to satisfy, making use of the well-known \emph{Pontryagin's minimum principle}, for which  the interested reader is referred to~\cite[Chapter 6]{BressanPiccoli} or~\cite[Chapter 6]{Vinter2010Book}.

To this aim, we introduce the \emph{Hamiltonian function} $H:\R^n\times \R^n\times [0,1]\to \R$, defined  by 
\begin{equation}
H(x,p,u)=\frac{1}{2}x^\top x+p^\top B_1x+up^\top B_2x,
\end{equation}
where we consider $z\in \R^n$ as column vectors, $z^\top$ is the corresponding row vector, and $z^\top w$ is the Euclidean scalar product between $z,w\in \R^n$. This choice for notation agrees with the one usually adopted in this subfield of control theory. 

We now explicitly write the conditions arising from Pontryagin's principle for finite-horizon problems.
\begin{lemma}[Pontryagin principle conditions]\label{lemma:PontryginSwitching}
Let us fix $T\in(0,\infty)$.
Consider any $(\hu,\hx)\in \cU\times \cX$ minimum for $\cF_T$ defined in~\eqref{eq:RelaxedProblem}. Then,
\begin{enumerate}[leftmargin=*]
    \item there exists a \emph{costate} $\hp:[0,T]\to \R^n$ such that $\hp\in W^{1,1}((0,T),\R^n)$, solution to the \emph{adjoint} system 
\begin{equation}\label{eq:AdjointEquation}
\begin{cases}
\hp'(t)=-\hx(t)-B_1^\top\hp(t)-\hu(t)B_2^\top \hp(t),\\
\hp(T)=0;
\end{cases}
\end{equation}
\item given the \emph{switching function} $\varphi:[0,T]\to \R$ defined by $\varphi(t):= \hp^\top(t) B_2\hx(t)$, the control $\hu$ satisfies  \emph{Weierstrass condition}, i.e., for almost all $t\in [0,T]$ it holds that
\begin{equation}\label{eq:Weierstrass}
\hu(t)=\begin{cases}
0\;\;\;&\text{if } \varphi(t)>0,\\
1\;\;\;&\text{if } \varphi(t)<0;
\end{cases}
\end{equation}
\item the Hamiltonian  is constant along $(u,x)$, i.e.,
\begin{equation}\label{eq:ConstancyHamiltonian}
\frac{1}{2}\hx(t)^\top \hx(t)+\hp^\top(t) B_1\hx(t)+\hu(t)\varphi(t)\equiv \frac{1}{2}|\hx(T)|^2\;\;\;\forall \;t\in [0,T].
\end{equation}
\end{enumerate}
\end{lemma}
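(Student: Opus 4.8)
The plan is to recognize Lemma~\ref{lemma:PontryginSwitching} as a direct specialization of the classical Pontryagin Minimum Principle for an autonomous Lagrange problem with fixed initial state, free terminal state, and a compact convex control constraint, and then to read off each of the three conclusions from the general statement.

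First I would restrict attention to the interval $[0,T]$. Although admissible pairs are indexed by controls on all of $(0,\infty)$, the finite-horizon cost $\frac12\int_0^T|\hx|^2\,dt$ and the trajectory $\hx$ on $[0,T]$ depend only on $\hu_{|[0,T]}$, while the tail $\hu_{|(T,\infty)}$ is constrained solely by $\chi_{[0,1]}$ and is otherwise free. Hence $(\hu_{|[0,T]},\hx_{|[0,T]})$ minimizes the standard problem of minimizing $\int_0^T \ell(\hx,\hu)\,dt$ with $\ell(x,u)=\frac12|x|^2$, dynamics $f(x,u)=B_1x+uB_2x$, $x(0)=x_0$, terminal state free, and control values in $U=[0,1]$. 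All data are smooth and autonomous, $U$ is compact and convex, and $\hx$ is Lipschitz on $[0,T]$ (being the solution of $x'=(B_1+uB_2)x$ with bounded control and hence bounded state). Since there are no terminal or state constraints, the problem is normal, so the PMP (e.g.\ \cite[Chapter 6]{BressanPiccoli} or \cite[Chapter 6]{Vinter2010Book}) applies with unit cost multiplier and Hamiltonian $H(x,p,u)=\ell(x,u)+p^\top f(x,u)$, which coincides with the $H$ defined above.

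Then I would extract the three conclusions in turn. The adjoint equation $\hp'=-\partial_x H(\hx,\hp,\hu)$, with $\partial_x H = \hx + B_1^\top \hp + \hu B_2^\top \hp$, is exactly \eqref{eq:AdjointEquation}, while the free-endpoint transversality condition yields $\hp(T)=0$; the regularity $\hp\in W^{1,1}((0,T),\R^n)$ follows since $\hp$ solves a linear ODE with $L^\infty$ coefficients. For conclusion~2, the minimum condition asserts that $\hu(t)$ minimizes $u\mapsto H(\hx(t),\hp(t),u)$ over $[0,1]$ for a.a.\ $t$; as $H$ is affine in $u$ with slope $\varphi(t)=\hp^\top(t)B_2\hx(t)$, minimizing $u\varphi(t)$ over $[0,1]$ forces $\hu(t)=0$ when $\varphi(t)>0$ and $\hu(t)=1$ when $\varphi(t)<0$, which is \eqref{eq:Weierstrass}. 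For conclusion~3, I would invoke the constancy of the Hamiltonian along optimal trajectories for autonomous problems, so that $t\mapsto H(\hx(t),\hp(t),\hu(t))$ is constant on $[0,T]$; evaluating at $t=T$ with $\hp(T)=0$ identifies the constant as $\frac12|\hx(T)|^2$, giving \eqref{eq:ConstancyHamiltonian}.

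The main obstacle is not the computation, which is routine, but invoking a version of the PMP whose hypotheses are genuinely met and whose conclusions include all three items simultaneously — in particular the Hamiltonian-constancy clause and the normality that makes the running cost enter with unit weight. I would therefore select a reference covering autonomous Lagrange problems with free right endpoint, where normality is automatic from the absence of endpoint constraints. If that reference states constancy only in an a.e.\ form, I would note that the minimized Hamiltonian $t\mapsto\min_{u\in[0,1]}H(\hx(t),\hp(t),u)$ is continuous, since it equals $\frac12|\hx(t)|^2+\hp^\top(t)B_1\hx(t)+\min(\varphi(t),0)$, and agrees a.e.\ with $H(\hx(t),\hp(t),\hu(t))$ by the Weierstrass condition; this upgrades the identity to all $t\in[0,T]$ and yields \eqref{eq:ConstancyHamiltonian} in the claimed pointwise form.
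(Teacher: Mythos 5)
Your proposal is correct and follows essentially the same route as the paper, which likewise proves the lemma by directly invoking the classical Pontryagin minimum principle (citing Bressan--Piccoli and Vinter), noting that the free terminal state yields $\hp(T)=0$ and normality ($\lambda_0=1$), and that the fixed final time of the autonomous problem gives constancy of the Hamiltonian. Your additional remarks on restricting to $[0,T]$ and on upgrading the a.e.\ constancy to a pointwise identity via continuity of the minimized Hamiltonian are sensible refinements of details the paper leaves implicit.
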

\begin{proof}
The proof is a direct application of Pontryagin's minimum  principle to our case. For the general statement we refer, for example, to~\cite[Theorem 6.5.1]{BressanPiccoli} or \cite[Theorem 6.2.1]{Vinter2010Book}. 
Since we do not have any constraint on the final position of the state, and the initial state is fixed, it follows that the adjoint state $p$ has final condition equal to $p(T)=0$ (and no initial condition). This, by non-degeneracy, implies that the scalar Lagrange multiplier associated to the cost function (denoted by $\lambda_0$ in~\cite[Theorem 6.5.1]{BressanPiccoli}) can be fixed to be equal to $1$ and it is thus not explicitly reported in the statement. Moreover, since the final time is fixed, we have constancy of the Hamiltonian function, as stated in \emph{(v)} of~\cite[Theorem 6.2.1]{Vinter2010Book}.
\end{proof}

Under an algebraic condition on the matrices $A_1,A_2$, we can ensure that the optimal controls are bang-bang for the finite-horizon problem. We are then able to apply Theorem~\ref{MT:StateConstrPatternPReserving2} proving that the same property holds for an infinite-horizon optimal control.

Given $M\in \R^{n\times n}$, let us denote by  $\cS(M):=\frac{1}{2}(M+M^\top)\in \R^{n\times n}$  its \emph{symmetric part}.

\begin{thm}\label{prop:SwitchedBangBagn}
Suppose that $A_1,A_2\in \R^{n\times n}$ commute, i.e., $[A_1,A_2]:=A_1A_2-A_2A_1=0$ and be such that 
\begin{equation}\label{eq:ConditionSwitchedSystems}
\begin{aligned}
&\mbox{for every }x\in \R^n\setminus\{0\} \text{ such that  } x^\top \cS(A_1-A_2)x=0 \text{ we have }\\
&x^\top \cS(A_1-A_2)A_1x>0 \text{ and } x^\top \cS(A_1-A_2)A_2x>0.
\end{aligned}
\end{equation}
Then,  for any $x_0\in \R^n\setminus\{0\}$ and any $T\in (0,\infty)$, any optimal control of $\cF_T$ in \eqref{eq:RelaxedProblem} is bang-bang of type $1$-$0$ with at most one discontinuity point. 
Alternatively to~\eqref{eq:ConditionSwitchedSystems}, suppose that
\begin{equation}\label{eq:ConditionSwitchedSystems2}
\begin{aligned}
&\mbox{for every }x\in \R^n\setminus\{0\} \text{ such that  } x^\top \cS(A_1-A_2)x=0 \text{ we have}\\
&x^\top \cS(A_1-A_2)A_1x<0 \text{ and } x^\top \cS(A_1-A_2)A_2x<0.
\end{aligned}
\end{equation}
Then,  for any $x_0\in \R^n\setminus\{0\}$ and any $T\in (0,\infty)$, any optimal control of $\cF_T$ in \eqref{eq:RelaxedProblem} is bang-bang of type $0$-$1$ with at most one discontinuity point. 
 Moreover, in both cases, there exists an optimal control $u_\infty$ of $\cF_\infty$ which is bang-bang of the same type with at most one discontinuity point.
\end{thm}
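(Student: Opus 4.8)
The plan is to establish the finite-horizon claim first, via Pontryagin's principle (Lemma~\ref{lemma:PontryginSwitching}), and then transfer the resulting structure to the infinite horizon through Corollary~\ref{cor:StateConstrPatternPReserving}. Existence of an optimal pair $(\hu,\hx)$ for $\cF_T$ with $T<\infty$ is classical. Since the state equation~\eqref{eq:StateEQ} is linear in $x$ with $x_0\neq 0$, uniqueness forces $\hx(t)\neq 0$ for every $t$, and $\hx$ is bounded below in norm on the compact interval $[0,T]$. By the Weierstrass condition~\eqref{eq:Weierstrass} the control is already pinned down ($\hu=1$ where $\varphi<0$, $\hu=0$ where $\varphi>0$), so the whole finite-horizon statement reduces to showing that the switching function $\varphi(t)=\hp^\top(t)B_2\hx(t)$ changes sign at most once, in the prescribed order.

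The heart of the argument is the computation of $\varphi'$. Differentiating $\varphi$ and substituting~\eqref{eq:AdjointEquation} and~\eqref{eq:StateEQ}, the $\hp$-dependent terms combine into $\hp^\top[B_2,B_1]\hx$ while the remaining term is $-\hx^\top B_2\hx$. Because $A_1,A_2$ commute, so do $B_1=A_2$ and $B_2=A_1-A_2$, whence $[B_2,B_1]=0$ and, using that a quadratic form sees only the symmetric part,
\[
\varphi'(t)=-\hx(t)^\top \cS(A_1-A_2)\,\hx(t)=:-\psi(t),\qquad S:=\cS(A_1-A_2),\ \psi:=\hx^\top S\hx.
\]
Thus the sign changes of $\varphi$ are governed by the single scalar function $\psi$. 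At any instant with $\psi(t)=0$ one has, for a.a.\ such $t$,
\[
\psi'(t)=2\,\hx^\top S\big((1-\hu)A_2+\hu A_1\big)\hx=2\big[(1-\hu)\,\hx^\top SA_2\hx+\hu\,\hx^\top SA_1\hx\big],
\]
a convex combination of the two quantities in~\eqref{eq:ConditionSwitchedSystems}, hence strictly positive under~\eqref{eq:ConditionSwitchedSystems} regardless of the value $\hu(t)\in[0,1]$. A compactness/homogeneity argument upgrades the positivity on the cone $\{x\neq 0:x^\top Sx=0\}$ to a uniform one: there are $\delta,\mu>0$ with $\psi'(t)\ge\mu$ for a.a.\ $t$ such that $|\psi(t)|<\delta|\hx(t)|^2$ (using $\inf_{[0,T]}|\hx|>0$). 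Consequently $\psi$ is strictly increasing on a neighbourhood of each of its zeros, so it vanishes at most once and only upward. Hence $\varphi'=-\psi$ vanishes at most once, $\varphi$ is increasing-then-decreasing, and the transversality $\varphi(T)=0$ forces the sign pattern ``$\le 0$ then $\ge 0$'', i.e.\ $\hu$ is bang-bang of type $1$-$0$ with at most one discontinuity. Replacing~\eqref{eq:ConditionSwitchedSystems} by~\eqref{eq:ConditionSwitchedSystems2} reverses the sign of $\psi'$ on the cone, making $\varphi$ decreasing-then-increasing and yielding type $0$-$1$.

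For the infinite-horizon assertion I would invoke the abstract machinery. The data fit Theorem~\ref{thm:MainTheorem} with $\ell_1(t,x)=\tfrac12|x|^2$, $\ell_2\equiv 0$, $U=[0,1]$ and greedy control $u_g\equiv 0\in U$; global existence (Assumption~\ref{ass:globalexistence}) is immediate since~\eqref{eq:StateEQ} has bounded coefficients for $u\in L^\infty((0,\infty),[0,1])$, so every solution is global. Thus $\cF_{T_k}\stackrel{\Gamma}{\to}\cF_\infty$ for every $T_k\to\infty$. Equi-coercivity follows from Lemma~\ref{lemma:StateCOnstrainCoercivityLemmaG}: coercivity in $u$ holds because $U$ is compact and convex, and coercivity in $x$ (its condition~2) holds because Lemma~\ref{lemma:GammaCOnvergenceofSolutions}(2) gives $x^{u_k,x_0}\to x^{u,x_0}$ in $\cX$ whenever $u_k\to u$. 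Corollary~\ref{cor:StateConstrPatternPReserving} therefore applies. Finally, taking the finite-horizon optima $u_{T_k}$ (type $1$-$0$, single switch $s_k\in[0,T_k]$, constant values $1$ and $0$), I pass to a subsequence with $s_k\to s_\infty\in[0,\infty]=\overline{\R}$; the constant values converge trivially and $T_k\to\infty$, so the minimizing sequence satisfies~\eqref{tjkujk}. Pattern preservation then gives that $u_\infty=\mathbf{1}_{[0,s_\infty)}$ is optimal for $\cF_\infty$ — bang-bang of type $1$-$0$ with at most one discontinuity (constant $1$ if $s_\infty=+\infty$). The case~\eqref{eq:ConditionSwitchedSystems2} is identical with the roles of $0$ and $1$ exchanged.

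I expect the main obstacle to be the second paragraph: passing from the instantaneous positivity of $\psi'$ \emph{on the cone} $\{x^\top Sx=0\}$ (which is all that~\eqref{eq:ConditionSwitchedSystems} supplies, and only for a.a.\ $t$ since $\hu\in L^\infty$) to genuine monotonicity of $\psi$ through its zeros. This needs the uniform lower bound $\psi'\ge\mu$ on a tube around the cone, obtained by compactness together with $\inf_{[0,T]}|\hx|>0$, and a careful ``two consecutive zeros'' contradiction exploiting that $\psi$ is merely Lipschitz with its derivative existing only almost everywhere.
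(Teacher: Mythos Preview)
Your proposal is correct and follows essentially the same approach as the paper: Pontryagin's conditions combined with commutativity give $\varphi'=-\hx^\top\cS(A_1-A_2)\hx$, the sign of its derivative on the cone $\{x^\top Sx=0\}$ forces a single zero of $\varphi'$ and hence, via the transversality $\varphi(T)=0$, the claimed bang-bang structure; the infinite-horizon conclusion then follows by verifying the hypotheses of Theorem~\ref{thm:MainTheorem} (with $\ell_1=\tfrac12|x|^2$, $\ell_2\equiv0$, $u_g\equiv0$) together with equi-coercivity, and applying Corollary~\ref{cor:StateConstrPatternPReserving} and Remark~\ref{rem:BangsBangs}. The only minor difference is that you obtain equi-coercivity in $x$ via Lemma~\ref{lemma:GammaCOnvergenceofSolutions}(2), whereas the paper establishes it by a direct $W^{1,\infty}_{\loc}$ bound on the trajectories; both routes are valid, and your care about the a.e.\ nature of $\psi'$ via the uniform compactness argument is, if anything, more explicit than the paper's treatment.
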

\begin{rem} 
The case $x_0=0$ ruled out in Theorem~\ref{prop:SwitchedBangBagn} is trivial: any control $u\in \cU$ is optimal (for any $T\in [0,+\infty]$), leading to a cost equal to $0$.
\end{rem}
\begin{rem}
Theorem~\ref{prop:SwitchedBangBagn} provides conditions on the subsystem matrices $A_1,A_2\in \R^{n\times n}$ under which the optimal controls are of a bang-bang form with at most one discontinuity point, both for finite and infinite horizon problems. The commutativity hypothesis is classical in control/stabilization of switched systems, see for example~\cite{AgrLib01}. It basically ensures that the reachable set under arbitrary measurable controls is equal to the reachable set under bang-bang controls with at most one switch, see also~\cite[Subsection 2.2.1]{Lib03}.  
The hypothesis in~\eqref{eq:ConditionSwitchedSystems} can be rewritten and checked numerically via the Finsler's Lemma (see for example~\cite[Theorem 2.2]{ZiZongJin10}), which states that~\eqref{eq:ConditionSwitchedSystems} is equivalent to the following proposition
\[
\begin{aligned}
&\exists\;\mu_1 \in \R,\text{ such that }\cS(\cS(A_1-A_2)A_1)+\mu_1\cS(A_1-A_2)\succ 0\;\text{and}\\ &\exists\;\mu_2 \in \R,\text{ such that }\;\cS(\cS(A_1-A_2)A_2)+\mu_2\cS(A_1-A_2)\succ 0,
\end{aligned}
\]
which, in turn, can be checked via semidefinite optimization techniques. Similarly, hypothesis \eqref{eq:ConditionSwitchedSystems2} can be numerically treated using the same tools.
\end{rem}

\noindent{\em Proof of Theorem \ref{prop:SwitchedBangBagn}.}
Let us fix $T\in (0,\infty)$, and consider $(\hu,\hx)\in \cU\times \cX$ any optimal control-state pair of $\cF_T$. Since $x_0\neq 0$, we have that $\hx(t)\neq 0$ for all $t\in [0,T]$, by linearity of the state equation~\eqref{eq:StateEQ}. By Lemma~\ref{lemma:PontryginSwitching}, there exists a costate $\hp:[0,T]\to \R^n$ for which the conditions in the aforementioned lemma are satisfied. We first note  that $[B_2,B_1]=[A_1,A_2]=0$.
Using this equality we have, in the sense of distributions
\begin{eqnarray}\label{eq:VarPhiShort}
\varphi&=&\hp^\top B_2\hx,
\\
\label{eq:derivativePHI}
\varphi'&=&-\hx^\top B_2\hx=-\hx^\top \mathcal{S}(B_2)\hx ,
\\
\varphi''&=& -2\hx^\top \mathcal{S}(B_2)B_1\hx
-2\hu\left (\hx^\top  \mathcal{S}(B_2)B_2\hx\right ).\nonumber
\end{eqnarray}
From this, it can be seen that  $\varphi\in\cC^1([0,T],\R)$ and $\varphi'\in W^{1,1}((0,T),\R)$.
Let us suppose~\eqref{eq:ConditionSwitchedSystems} holds, and
consider the set $\mathcal{T}:=\{ t\in [0,T]\;:\;  \varphi'(t)=0\}$. By the expression~\eqref{eq:VarPhiShort}, we have $\hx(t)^\top \mathcal{S}(A_1-A_2)\hx(t)=0$ for all $t\in \mathcal{T}$. Further computing, we obtain 
\[
\begin{aligned}
\varphi''&= -2\left (\hx^\top \mathcal{S}(B_2)B_1\hx+\hu \hx^\top  \mathcal{S}(B_2)B_2\hx\right )\\&=-2\left (\hx^\top \mathcal{S}(A_1-A_2)A_2\hx+\hu \hx^\top  \mathcal{S}(A_1-A_2)(A_1-A_2)\hx\right )
\\&=-2(1-\hu)\hx^\top \mathcal{S}(A_1-A_2)A_2\hx-2 \hu\hx^\top \mathcal{S}(A_1-A_2)A_1\hx
\end{aligned}
\]
and, thanks to condition~\eqref{eq:ConditionSwitchedSystems}  we have that  $\varphi''<0$  almost everywhere in a neighborhood of $\mathcal{T}$, i.e., $\varphi'$ is locally  strictly decreasing in such neighborhood.

 Then, given $t_0\in \mathcal{T}$, by continuity of $\varphi'$, $t_0$ is actually the \emph{unique} instant for which $\varphi'(t)=0$: otherwise, we will contradict the fact that $\varphi'$ is locally decreasing at every point in which it vanishes. 
More specifically, we have that $\varphi'(t)>0$ for all $t\in [0,t_0)$ and $\varphi'(t)<0$ for all $t_0\in (t_0,T]$ and thus $\varphi$ is strictly increasing in $[0,t_0)$ and strictly decreasing in $(t_0,T]$. Since $ \hp(T)=0$, we have $\varphi(T)=0$ and then there exists at most one $t\in [0,T)$ such that $\varphi(t)=0$.
This, recalling the  Weierstrass condition~\eqref{eq:Weierstrass}, implies that $\hu:(0,\infty)\to \R^m$ has at most one discontinuity point in $[0,T)$, and it is bang-bang of the form $1$-$0$.

The case in which~\eqref{eq:ConditionSwitchedSystems2} holds, leads to $\varphi''<0$ (locally) for all $t\in [0,T]$ such that $\varphi'(t)=0$ and it is thus  similar, leading to bang-bang optimal controls of type $0$\,-$1$.

Now, to treat the infinite-horizon case, we  apply  Theorem~\ref{MT:StateConstrPatternPReserving2} to show that $\cF_T$ is pattern preserving.  

 Let thus verify that all the hypotheses of Theorem~\ref{MT:StateConstrPatternPReserving2} are satisfied.
 Let us  start by checking hypothesis~\textnormal{(A1)} of Section~\ref{sec:MainResults} (we have already observed that hypothesis~\textnormal{(A0)} is satisfied). 
Since $A_1$ and $A_2$ commute, it can be seen via Gr\"onwall's Lemma, that there exists  $M\geq 1$ such that, for every $\varepsilon>0$, it holds that 
\begin{equation}\label{eq:boundednessSwitching}
|x^{u,x_0}(t)|\leq M e^{(\lambda+\varepsilon) t} |x_0| \;\;\forall x_0\in \R^n,\forall u\in  L^\infty((0,\infty),[0,1]),\forall t\ge0,
\end{equation}
with $\lambda=\max_{i\in\{1,2\}}\lambda(A_i)$ and where, given $A\in \R^{n\times n}$, $\lambda(A)$ denotes the~\emph{spectral abscissa of $A$}, defined as $\displaystyle \lambda(A) : =\max_{j\in\{1,\dots, n\}} \text{Re}(\lambda_j)$, where $\lambda_1,\dots,\lambda_n$ denote the (possibly complex) eigenvalues of $A$, see~\cite[Section~2.2]{Lib03}. 
Inequality~\eqref{eq:boundednessSwitching} implies that hypothesis~\textnormal{(A1)} holds: 
indeed, for any $u\in L^\infty((0,\infty),[0,1])$ we have that $x^{u,x_0}$ is bounded in $L_{\loc}^\infty([0,\infty),\R^n)$ and is thus globally defined. \\
The hypotheses \emph{(a)} and \emph{(b)} and \emph{(c)} of Theorem~\ref{MT:StateConstrPatternPReserving2} are trivially satisfied by defining $\ell_1(t,x)=\frac{|x|^2}{2}$, $\ell_2(t,x,u)=0$ and $u_g\equiv 0$, and recalling that $U=[0,1]$ is compact.
 
Among the hypotheses of Theorem~\ref{MT:StateConstrPatternPReserving2}, it remains only to prove the \emph{coercivity w.r.t.\ $x$}. Let us then consider  sequences $0<T_k\to\infty$  and  $(u_k,x_k)$  such that $
\mathcal{F}_{T_k}(u_k,x_k)\leq C$ for all $k\in \N$ and a prescribed $C\geq 0$. By definition of $\mathcal{F}_{T_k}(u_k,x_k)$, we have that $u_k\in  L^\infty((0,\infty),[0,1])$ and $x_k=x^{u_k,x_0}$.
Again by~\eqref{eq:boundednessSwitching}, 
the sequence $x_k=x^{u_k,x_0}$ is bounded in $L_{\loc}^\infty([0,\infty),\R^n)$ and, since $\frac{d}{dt}x^{u_k,x_0}(t)=B_1x^{u_k,x_0}(t)+u_k(t)B_2x^{u_k,x_0}(t)$, then it is bounded also in $\Wl^{1,\infty}([0,\infty),\R^n)$. Since it is bounded, it admits a converging subsequence $x_{k_\ell}\to x$ in the chosen topology, i.e., the strong topology of $L^\infty_{loc}([0, \infty), \R^n)$.

We can thus apply Theorem~\ref{MT:StateConstrPatternPReserving2}, obtaining that the considered family $\cF_T$ is \emph{pattern preserving} with respect to every sequence $0< T_k\to\infty$.

Let us now consider a sequence $0<T_k\to \infty$ and  an optimal control $u_k\in \cU$ of $\cF_{T_k}$. We have already proved that $u_k$ has at most a discontinuity point in $[0,T_k)$ and it is always of the same bang-bang type, $0$\,-$1$ or $1$-\,$0$, for every $k\in \N$. Let us suppose, without loss of generality,  that we are in the $0$\,-$1$ case.
Let us denote  by $\tau^k$ the discontinuity point of $u_k$ in $(0,T_k)$, if any; otherwise, if $u_k$ is constant equal to $0$, we set $\tau^k=T_k$ and if $u_k$ is constant equal to $1$ we set $\tau^k=0$.  There exists a subsequence of $T_k$ (without relabeling) such that $\lim_{k\to \infty}\tau^{k}$ exists in $(0,\infty]$; let us denote such limiting point by $\tau^\infty$. 
 Since the family $\cF_T$ has been proven to be pattern preserving, via Remark~\ref{rem:BangsBangs}, we have
 that $u_\infty:(0,\infty)\to [0,1]$ defined by
\[
u_\infty(t)=\begin{cases}0 \;\;\;\text{if } t\in (0,\tau^\infty),\\
1 \;\;\;\text{if } t\in [\tau^\infty, \infty),
\end{cases}
\]
is an optimal control for $\cF_\infty$, concluding the proof.  \qed

\subsection{Optimal control of SIR models }
Optimal control problems for epidemics driven by 
Susceptible-Infectious-Recovered (SIR) models introduced by Kermack and McKendrick (\cite{kermack1927contribution})
has been intensively studied in the past decades. For an overview, we refer to~\cite{anderson1992infectious,Behncke,hansen2011optimal,SFT2022} and references therein.  
We briefly recall here this framework and we then apply the results previously developed.\\
We consider  a two-dimensional SIR model defined by
\begin{equation}\label{eq:SIR2}
\begin{cases}
s'(t)=-b(t)\, s(t)\, i(t)-v(t)s(t),%
\\[1ex]
i'(t)= b(t)\, s(t)\, i(t)-\gamma i(t),\\[1ex]
s(0)=s_0,\ i(0)=i_0,
\end{cases}
\end{equation}
 where the states $s$ and $i$ are the population densities of susceptible and infectious individuals, respectively, while 
 the measurable parameters $b:(0,\infty)\to [\beta_\star, \beta]$ and $v:(0,\infty)\to [0,v_M]$ represent the transmission and vaccination rate (respectively) and can be considered as external or controlled parameters. The constant coefficients $\beta_\star,\beta\in (0,1)$ and $v_M,\gamma>0$ 
are, respectively, the minimal and maximal transmission rates, the maximal vaccination rate and the (constant) recovery rate. 
Whenever measurable inputs $v:(0,\infty)\to [0,v_M]$, $b:(0,\infty)\to [\beta_\star,\beta]$ and initial condition $x_0=(s(0),i(0)) =(s_0,i_0)\in \R^2$ are fixed, the unique solution to \eqref{eq:SIR2} will be denoted by $x^{b,v,x_0}$.

Let us state the following preliminary invariance result.
\begin{lemma}\label{lemma:TriangleInvariant} 
For any choice of parameters  $\beta_\star,\beta\in (0,1)$ and $v_M,\gamma>0$, the 
unit triangle $$\T=\{(s,i)\in \R^2\;\vert\; s\geq 0,\;i\geq 0,\;s+i\leq 1\}.
$$
is forward invariant for~\eqref{eq:SIR2} with respect to $[\beta_\star,\beta]\times [0,v_M]$, i.e.,
\[
x_0\in \T\;\;\Rightarrow\;\;x^{b,v,x_0}(t)\in \T,\;\;\forall \,b\in L^\infty((0,\infty),[\beta_\star,\beta]),\;\;\forall v\in L^\infty((0,\infty),[0,v_M]),\;\forall \,t\ge0.
\]
\end{lemma}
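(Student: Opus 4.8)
The plan is to prove invariance by the classical strategy of checking that the vector field of~\eqref{eq:SIR2} never points strictly outward along the three edges of $\partial\T$, which I will realize concretely through integrating-factor representations for the two positivity constraints and a scalar differential inequality for the constraint $s+i\le1$. Throughout I would work on the maximal interval of existence $[0,\tau)$ of $x^{b,v,x_0}$ (well defined by Assumption~\ref{assumpt:Basic0_L1}, since~\eqref{eq:SIR2} has the separated Carath\'eodory--Lipschitz form), on which the absolutely continuous components $s,i$ are locally bounded.

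First I would establish $s(t)\ge0$. Rewriting the first equation as $s'=-(b(t)i(t)+v(t))\,s$, the coefficient $t\mapsto b(t)i(t)+v(t)$ lies in $L^1_{\loc}([0,\tau))$ because $b,v$ are bounded and $i$ is locally bounded; hence this linear-in-$s$ equation integrates to
\[
s(t)=s_0\,\exp\!\left(-\int_0^t \big(b(\sigma)i(\sigma)+v(\sigma)\big)\,d\sigma\right),
\]
which is nonnegative whenever $s_0\ge0$. In the same way, writing the second equation as $i'=(b(t)s(t)-\gamma)\,i$ and noting that $t\mapsto b(t)s(t)-\gamma\in L^1_{\loc}([0,\tau))$, I obtain
\[
i(t)=i_0\,\exp\!\left(\int_0^t \big(b(\sigma)s(\sigma)-\gamma\big)\,d\sigma\right)\ge0
\]
for $i_0\ge0$. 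This settles the two positivity constraints on all of $[0,\tau)$.

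Next I would handle the constraint $s+i\le1$. Setting $w:=s+i$, the bilinear terms $\pm b s i$ cancel and one obtains $w'=-v(t)s-\gamma i$ almost everywhere. Since the previous step gives $s,i\ge0$ and since $v,\gamma\ge0$, this yields $w'\le0$ a.e., so $w$ is non-increasing and therefore $w(t)\le w(0)=s_0+i_0\le1$ for every $t\in[0,\tau)$. Combining the three estimates shows $x^{b,v,x_0}(t)\in\T$ for all $t\in[0,\tau)$.

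Finally, to upgrade this to $\tau=\infty$ (so that the conclusion holds for all $t\ge0$), I would invoke the standard continuation principle that a maximal solution whose trajectory remains in a fixed compact set must be global (see, e.g.,~\cite[Section~I.5]{Hale}). Since $\T$ is compact and the trajectory stays in $\T$, the maximal solution cannot escape in finite time, whence $\tau=\infty$. I expect the only delicate point to be precisely this interplay between confinement and global existence: the integrating-factor identities presuppose local boundedness of $s,i$, which is automatic on the maximal interval, and it is the a priori confinement to the compact set $\T$ that then rules out finite-time blow-up; organizing the argument on $[0,\tau)$ first and deducing global existence last avoids any circularity. As a byproduct, this confinement also exhibits $\T$ as a bounded forward-invariant set in the sense of Section~\ref{sec:ParticularCases}, so that the system~\eqref{eq:SIR2} is uniformly globally bounded on $\T$.
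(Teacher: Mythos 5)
Your proposal is correct and follows essentially the same route as the paper's proof: integrating-factor representations give the nonnegativity of $s$ and $i$, summing the equations gives $(s+i)'=-vs-\gamma i\le0$ and hence $s+i\le1$, and confinement in the compact triangle then yields global existence. The only cosmetic difference is that you organize the argument explicitly on the maximal interval $[0,\tau)$ and invoke the continuation principle at the end, whereas the paper works on a generic interval $[0,c)$ and concludes $c=\infty$ at the end; the substance is identical.
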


\begin{proof}
Since the right-hand side in~\eqref{eq:SIR2} satisfies hypothesis~\textnormal{(A0)}  we have local existence and uniqueness of an absolutely continuous solution, for every $u\in \cU$ and $(s_0,i_0)\in\T$.

Let $[0,c)$,  with $c>0$, be a time interval in which the unique solution $(s,i)$ exists.  
Let us suppose that $(s_0,i_0)\in \T$ and
 claim that $s(t)\ge0$ and $i(t)\ge0$ for every $t\in [0,c)$. Indeed,
considering $i$ as a coefficient, the function $s$ is the unique solution to the linear Cauchy problem
$$
\begin{cases}
s'=-\big(b(t) i(t)+v(t)\big) s, \\
s(0)=s_0,
\end{cases}
$$
which is given by
$$
s(t)=s_0 e^{-\int_0^t[b(\xi) i(\xi)+v(\xi)]\,d\xi}\quad\forall\,t\in [0,c).
$$
Hence, $s$ is non-negative in $[0,c)$. Similarly, one can prove that also $i$ is non-negative. 

Then, by summing the two equations of the system we get
\begin{equation}\label{d(s+i)}
(s+i)'=-vs-\gamma i\le 0\ \mbox{ in }[0,c),
\end{equation}
which implies that $s+i$ is non-increasing in $[0,c)$. Then we have
\begin{equation}\label{s+ile}
0\le s(t)+i(t)\le s_0+i_0\le1\quad\forall\,t\in [0,c).
\end{equation}
Hence, the solutions are  bounded and remain within the compact triangle $\T$. This boundedness implies that they exists for every $t\in[0,\infty)$ and all properties above hold with $c=\infty$. 
\end{proof}

\noindent Simply looking at the equations, one also sees that $i$ and
$s$ have then bounded derivatives.   This, in particular,  implies that $x^{b,v,x_0}\in W^{1,\infty}((0,\infty),\R^2)$ for any $b\in L^\infty((0,\infty),[\beta_\star,\beta])$ and  $v\in L^\infty((0,\infty),[0,v_M])$.

Given $x_0\in \T$, we study  optimal control problems for system \eqref{eq:SIR2} which consists in minimizing, over all controls $b$ or $v$ and the corresponding epidemic trajectories $s$ and $i$, the objective functional
\begin{equation}\label{eq:CostSIR}
J_T(b,v,s,i)=\int_0^{T}\lambda_b(\beta-b(t))+ \lambda_v v(t)+\lambda_i i(t) \,dt,
\end{equation}
under a state constraint
\begin{equation}\label{eq:ConstraintSIR}
i(t)\le i_M\quad\forall\,t\in[0,T],
\end{equation}
where $i_M>0$ represents a safety threshold for the intensive care units capacity. The constant coefficients $\lambda_b,\lambda_v\ge0$ and  $\lambda_i\ge0$ modulate the economic and health-related costs of infection with respect to the cost of vaccination/reduction of the contact rate. The results now illustrated, in a condensed form, are borrowed from~\cite{FreddiGoreac23,DellaRossaFreddi24}. We analyze separately the case with no control on the contact rate (i.e., $b\equiv \beta$) and the case with no control on the vaccination rate (i.e., $v\equiv 0$).

\def\vac{{\rm vac}}

Let us start from the case in which $b\equiv \beta$ and the unique control variable is the vaccination rate $v$. Since a state-constraint is involved, whenever $i_M<1$, the maximal viable set $B$, that is, the set of initial conditions such that the state-constraint is satisfied along the whole epidemic horizon, is a proper subset of the invariant triangle $\T$. This set $B$ has been characterized in \cite[Theorem 3.5(2)]{DellaRossaFreddi24}. We refer to that paper for a detailed description. 

\begin{prop}[optimal vaccination]
Let $T\in [0,\infty]$ and $x_0=(s_0,i_0)\in B$. 
Let us consider the optimal control problem $\cF_T$ composed by
\begin{enumerate}[leftmargin=*,label=(\alph*)]
\item  the SIR state-equation~\eqref{eq:SIR2} with $b\equiv \beta>0$,
    \item the cost functional $J_T$ given in~\eqref{eq:CostSIR}, to be minimized over the control space $\cU=L^\infty((0,\infty),U)$ with $U=[0,v_M]$ and state space $\cX=W^{1,1}_\loc([0,\infty),\R^2)$,
    \item the state-constraint~\eqref{eq:ConstraintSIR}. 
\end{enumerate}  
Then, for any finite $T>0$ large enough, any optimal control $v_T:(0,\infty)\to [0,v_M]$ of $\cF_T$ is of the bang-bang form
\[
v_T(t)=\begin{cases}
v_M&\text{if }t\in (0, \tau_1^T),\\
0&\text{if }t\in (\tau_1^T,\infty),
\end{cases}
\]
for some $\tau_1^T\in[0,T]$.
The  infinite-horizon problem ($T=\infty$) admits an optimal control of the same bang-bang structure, for a $\tau_1^\infty<\infty$.  
\end{prop}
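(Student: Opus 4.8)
The plan is to separate the finite- and infinite-horizon analyses, reproducing the scheme of Theorem~\ref{prop:SwitchedBangBagn}: first extract the bang-bang structure on $[0,T]$ from Pontryagin's principle, then promote it to $T=\infty$ through the pattern-preservation machinery. For the finite-horizon claim, note that with $b\equiv\beta$ the running cost reduces to $\int_0^T(\lambda_v v+\lambda_i i)\,dt$, which is affine in the single control $v$, while Lemma~\ref{lemma:TriangleInvariant} confines the trajectory to the compact invariant triangle $\T$; hence an optimal pair exists for every finite $T$. I would then invoke the state-constrained Pontryagin principle (\cite{Vinter2010Book}), as carried out in \cite{FreddiGoreac23,DellaRossaFreddi24}: the Hamiltonian is affine in $v$, so the optimal control is governed by the sign of the switching function $\varphi$ given by the $v$-coefficient of $H$ (essentially $\lambda_v-p_s\,s$, with $p_s$ the costate conjugate to $s$), taking $v=v_M$ where $\varphi<0$ and $v=0$ where $\varphi>0$. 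A monotonicity analysis of $\varphi$ along the trajectory, analogous to the $\varphi',\varphi''$ computation of Theorem~\ref{prop:SwitchedBangBagn} but now bookkeeping the possible jump of the costate when the constraint $i=i_M$ is active, yields at most one sign change, of $v_M$-$0$ type, once $T$ is large enough. These are exactly the computations of \cite{DellaRossaFreddi24}, which I would cite rather than reproduce.

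For the infinite horizon, I would recast the problem in the setting of Section~\ref{Sec:StateConstr}, taking $\cU=L^\infty((0,\infty),[0,v_M])$, $\cX=\Wl^{1,1}([0,\infty),\R^2)$ and splitting the integrand as $\ell=\ell_1+\ell_2$, with $\ell_1(t,(s,i))=\lambda_i i+\chi_X((s,i))$ where $X=\{(s,i)\in\T:\ i\le i_M\}$ is compact, and $\ell_2(t,x,v)=\lambda_v v$. Then $\ell_1$ is a non-negative normal integrand (lower semicontinuous because $X$ is closed) and $\ell_2$ is a non-negative normal convex integrand (being linear in $v$), so hypothesis \emph{(a)} of Theorem~\ref{thm:MainTheorem} holds; the constant greedy strategy $u_g\equiv0\in[0,v_M]$ satisfies $\ell_2(t,x,0)=0$ and takes values in $U$, giving \emph{(b)}; and Assumption~\ref{ass:globalexistence} follows from the forward invariance of $\T$ (Lemma~\ref{lemma:TriangleInvariant}). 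Theorem~\ref{thm:MainTheorem} then yields $\G^-_{\seq}(\cU\times\cX)\lim_{T\to\infty}\cF_T=\cF_\infty$.

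The equi-coercivity required for pattern preservation is the intrinsic one of Section~\ref{sec:ParticularCases}: by Lemma~\ref{lemma:TriangleInvariant} the system is uniformly globally bounded with respect to $[0,v_M]$ (Definition~\ref{defn:UniformBoundedness}), so every admissible state lies in the compact triangle $\T$ and, by direct inspection of \eqref{eq:SIR2}, has uniformly bounded derivative; hence any sublevel sequence $x_k=x^{v_k,x_0}$ is bounded in $\Wl^{1,\infty}([0,\infty),\R^2)$ and precompact in $\cX$ (Theorem~\ref{th_comp_wloc} and Lemma~\ref{lem_ucbs}), while the controls lie in the compact convex set $[0,v_M]$ and are weak$^\star$ precompact. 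Corollary~\ref{cor:StateConstrPatternPReserving} then makes $\cF_T$ $(T_k)$-pattern preserving for every $0\le T_k\to\infty$. Choosing, along a sequence $T_k\to\infty$, optimal bang-bang controls $v_k$ for $\cF_{T_k}$ with switch times $\tau_1^{T_k}\in[0,T_k]$ and passing to a subsequence with $\tau_1^{T_k}\to\tau_1^\infty\in[0,\infty]$, Remark~\ref{rem:BangsBangs} gives that $v_\infty=v_M\mathbf{1}_{(0,\tau_1^\infty)}$ is optimal for $\cF_\infty$.

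The main obstacle is to rule out $\tau_1^\infty=+\infty$, i.e.\ to show the limiting optimum does not vaccinate forever. I would argue this from finiteness of $\inf\cF_\infty$: an admissible control of finite cost exists, since once the susceptible fraction drops below the herd-immunity threshold $\gamma/\beta$ the infected fraction decays and $\int_0^\infty i\,dt<\infty$ (integrating $s'/s=-\beta i$ with $v=0$ gives $\int_0^\infty\beta i\,dt=\ln(s_0/s_\infty)<\infty$), so switching vaccination off on a tail keeps the total cost finite. Were $\tau_1^\infty=+\infty$, the control $v_\infty\equiv v_M$ would incur a cost bounded below by $\int_0^\infty\lambda_v v_M\,dt=+\infty$ (for $\lambda_v>0$), contradicting optimality; hence $\tau_1^\infty<\infty$. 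This finiteness argument, together with the state-constrained Pontryagin analysis, is the step that genuinely uses the SIR structure, and is precisely where I would rely on the results of \cite{FreddiGoreac23,DellaRossaFreddi24}.
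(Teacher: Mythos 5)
Your proposal follows essentially the same route as the paper's own (very condensed) argument: cite \cite{DellaRossaFreddi24} for the finite-horizon Pontryagin analysis, verify the hypotheses of Theorem~\ref{thm:MainTheorem} using the invariance of $\T$ from Lemma~\ref{lemma:TriangleInvariant}, obtain equi-coercivity from the confinement of trajectories to a compact set together with the boundedness of their derivatives, and conclude via Corollary~\ref{cor:StateConstrPatternPReserving} and Remark~\ref{rem:BangsBangs}. Your decomposition $\ell_1=\lambda_i i+\chi_X$, $\ell_2=\lambda_v v$, $u_g\equiv 0$ is exactly what is needed (a pedantic point: to make $\ell_2$ non-negative on all of $\R^m$ one should absorb the constraint $\chi_{[0,v_M]}(v)$ into it, as the formulation~\eqref{eq_coerick} already does for $p=\infty$). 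The one place where you genuinely diverge is the finiteness of $\tau_1^\infty$. The paper obtains it from a \emph{uniform-in-$T$ upper bound on the finite-horizon switching times} $\tau_1^T$ (proved in \cite[Section 5]{DellaRossaFreddi24}), which immediately forces the limit to be finite. You instead argue by contradiction from finiteness of $\inf\cF_\infty$: this is a clean and self-contained alternative, but it only works when $\lambda_v>0$, whereas the paper's standing assumption is merely $\lambda_v\ge 0$; if $\lambda_v=0$ and $\lambda_i>0$, the control $v\equiv v_M$ has zero vaccination cost and your contradiction evaporates, while the uniform bound on $\tau_1^T$ still applies. Your exhibited finite-cost admissible control (vaccinate until $s$ drops below $\gamma/\beta$, then switch off, using $\int_0^\infty\beta i\,dt=\ln(s_0/s_\infty)<\infty$ on the tail) is correct and needed in either approach to ensure $\cF_\infty\not\equiv+\infty$, though you should note that viability of this particular strategy from $x_0\in B$ is itself part of the cited characterization of the viability kernel. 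Aside from the $\lambda_v=0$ caveat, the argument is sound.
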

The case with finite $T>0$ is obtained by applying the Pontryagin's principle to the problem under consideration, for the details we refer to~\cite[Theorem 4.6]{DellaRossaFreddi24}. It is now easy to check that all the  hypotheses of Theorem~\ref{MT:StateConstrPatternPReserving2} are satisfied.
The only nontrivial task is to verify \emph{coercivity w.r.t.\ $x$} of the joint functionals. This can be verified via Proposition~\ref{Lemma:LocallyLispchitzSol} since solutions are confined in the  compact set $\T$, as stated in Lemma~\ref{lemma:TriangleInvariant}.  Moreover, the finiteness of $\tau_1^\infty$  can be proven, since there exists a finite upper bound for $t_1^T$ uniform on $T$.
For such details, a more direct proof and a discussion about uniqueness of the optimal control we refer to~\cite[Section 5]{DellaRossaFreddi24}.

\medskip

We now recall a similar result in the case of \emph{non-pharmaceutical} control, i.e.,  when no vaccination is possible, and the only control parameter is given by the function $b:(0,\infty)\to [\beta,\beta^\star]$. Also in this case, whenever $i_M<1$,  the maximal viable set $\cB$ is a proper subset of $\mathbb{T}$ and has been characterized in \cite[Item 5.\ of Theorem 2.3]{AvrFre22}, see also \cite{AFG22cor}, to which we refer also for further details. Moreover, since this is just a matter of examples and for the sake of simplicity, we restrict ourselves to the simpler case in which the cost depends only the control (i.e., $\lambda_i=0$). Nevertheless, in \cite{FreddiGoreac23} also the general case has been  handled.

{
\begin{prop}[Optimal non-pharmaceutical intervention]
Given $x_0=(s_0,i_0)\in \cB$ and $T\in [0,\infty]$, 
let us consider the optimal control problem $\cF_T$ composed by
\begin{enumerate}[leftmargin=*,label=(\alph*)]
\item  the SIR state-equation~\eqref{eq:SIR2} with $v\equiv 0$,
    \item the cost functional $J_T$ given in~\eqref{eq:CostSIR} with $\lambda_i=0$, to be minimized over the control space $\cU=L^\infty((0,\infty),U)$ with  $U=[\beta_\star,\beta]$  and state space $\cX=W^{1,1}_\loc([0,\infty),\R^2)$,
    \item the state-constraint~\eqref{eq:ConstraintSIR}. 
\end{enumerate}   Then, for any finite $T>0$ large enough, any optimal control $b_T:(0,\infty)\to [\beta_\star,\beta]$ of $\cF_T$ is of the  form
\[
 b_T(t)=\begin{cases}
\beta&\mbox{ if }t\in(0,\tau_1^T),\\
\beta_\star&\mbox{ if }t\in(\tau_1^T,\tau_2^T),\\
\dis\beta-\frac{\gamma}{s(\tau_2^T)+\gamma i_M(\tau_2^T-t)}&\mbox{ if }t\in(\tau_2^T,\tau_3^T),\\[2ex]
\beta&\mbox{ if }t\in(\tau_3^T,\infty), 
\end{cases}
\]
for some $0\leq \tau_1^T\leq\tau_2^T\leq\tau_3^T\le T$.
The  infinite-horizon problem (\,$T=\infty$) admits an optimal control of the same structure, with $\tau_3^\infty<\infty$. 
\end{prop}

Again, the finite-horizon case $T>0$ is obtained by applying the Pontryagin's principle (see~\cite{FreddiGoreac23}). Since, for initial conditions $x_0\in \cB$,  all the  hypotheses of Theorem~\ref{MT:StateConstrPatternPReserving2} are satisfied, the infinite-horizon case directly follows. The finiteness of $\tau_3^\infty$ is proven in~\cite[Theorem 9.9]{FreddiGoreac23} to which we refer for the details and for an alternative and more ad-hoc proof.
}

\section{Conclusions}
In this work, we have introduced a general framework to study the preservation of optimal control patterns as the time horizon extends to infinity. By employing the notion of $\Gamma$-convergence as crucial technical tool, we have provided sufficient conditions ensuring that the structural properties of optimal controls  persist in the infinite-horizon case. Our  approach naturally incorporates the possibility of the presence of  state constraints and provides a rigorous variational foundation for analyzing long-time behavior in optimal control problems. 
 The theoretical results have been eventually  illustrated through examples, with a specific focus on switched systems and epidemic control. These results suggest promising directions for further research, including the extension to broader classes of control problems and/or more general variational problems.
\appendix
\section{Local Sobolev function spaces}\label{subsec_locSob}
In this section 
we introduce a notion of local-in-time Sobolev spaces, suitable to our analysis. In doing it, we assume as known the basic concepts of Sobolev spaces $W^{m,p}$ on a bounded interval and their duals, as well as the notion of strong (or norm), weak and weak$^{\star}$ topologies on such classical spaces. The  unaccustomed reader is referred to \cite{Adams,Brezis2011}.      

\begin{defn}
Given $p\in [1,\infty]$,  $m\in\N$, and $\tau\in(0,\infty]$,  we define the set of \emph{locally $(m,p)$-Sobolev functions on $[0,\tau)$} by
\[
\Wl^{m,p}([0,\tau),\R^n)=\left \{x\in\cD'((0,\tau),\R^n)\; :\;  x_{|(0,T)}\in W^{m,p}((0,T),\R^n)\ \forall\, T\in(0,\tau)\right\},
\]
\noindent where $x_{(0,T)}$ denotes the restriction of the distribution $x$ to all test functions with compact support in $(0,T)$. 
\end{defn}

When $m=0$ we use also the notation $L_\loc^p([0,\tau),\R^n):=\Wl^{0,p}([0,\tau),\R^n)$. 

Since the intervals $(0,T)$ are bounded (i.e., $T<\infty$), then we have
\begin{equation}\label{eq_inclWloc}
1\le p\le q\ \implies\ \Wl^{m,q}([0,\tau),\R^n)\subseteq \Wl^{m,p}([0,\tau),\R^n).
\end{equation}
Moreover, it is easy to check that $L^p_\loc([0,\tau))$ is contained in the classically defined space $L^p_\loc(0,\tau)$ on the open set $(0,\tau)$, and the inclusion is strict (e.g., $x(t)=1/t\in L^p_\loc(0,\tau)\setminus L^p_\loc([0,\tau))$\,). 
Consequently, for any $m\in \N$, we have 
$$
\Wl^{m,p}([0,\tau),\R^n)\subset \Wl^{m,p}((0,\tau),\R^n)\subset\cD'((0,\tau);\R^n).
$$
On $\Wl^{m,p}([0,\tau),\R^n)$, we can naturally consider the family of separating \emph{seminorms} $\|\cdot\|_T$ parametrized by $T\in(0,\tau)$, given by
\[
\|x\|_T:=\|x_{\vert(0,T)}\|_{W^{m,p}(0,T)}
\]
where $\|\cdot\|_{W^{m,p}(0,T)}$ denotes the usual norm of the Sobolev space $W^{m,p}((0,T),\R^n)$.   The following definition is standard in this functional framework.

\begin{defn}\label{def_bddlctvs}
A set $\cH\subset \Wl^{m,p}([0,\tau),\R^n)$ is said to be \emph{bounded} if it is bounded in $W^{m,p}((0,T),\R^n)$ for every $T\in(0,\tau)$, that is, for every such $T$ there exists  $C_T\geq 0$ such that
\[
\|x\|_{T}\leq C_T\quad \forall x\in \cH.
\]
\end{defn}

\noindent  The topology induced by such a family of seminorms  (called {\em strong topology}) makes  $\Wl^{m,p}([0,\tau),\R^n)$ a locally convex topological vector space (see \cite[Theorem 8.1]{Bianchini}). A basis $\cB$ of neighborhoods of $0$ in this topology is defined by
$$
\cB=\Big\{\bigcap_{i=1}^r\big\{x\; :\; \|x\|_{T_i}<\frac{1}{h}\big\}\; :\; T_i\in(0,\tau),\ i=1,...,r,\mbox{ and } r,h\in\N\Big\}.
$$
Every increasing sequence of positive real numbers $T_k\to\tau^-$ (a so-called {\em defining sequence}) determines  a sequence of seminorms that defines  $W^{m,p}_\loc([0,\tau);\R^n)$ as an {\em $LF$-space} or, equivalently,  {\em countable strict inductive limit} of Frech\'et spaces 
(see, for instance, \cite[Chapter 13]{Treves}). The topology generated by the corresponding countable family of seminorms is the same generated by the whole family, and turns out to be  described by the metric
\[
d(x,y):=\sup_{k\in \N}\ 2^{-k}\frac{\|x-y\|_{T_k}}{1+\|x-y\|_{T_k}}.
\]
With this metric, the space $\Wl^{m,p}([0,\tau),\R^n)$ is complete. 

\begin{prop}\label{prop_stchmp}
Let $x,x_k\in\Wl^{m,p}([0,\tau),\R^n)$ for every $k\in\N$. 
The following propositions are equivalent.
\begin{enumerate}
\item[1.] $x_k\to x$ strongly in $\Wl^{m,p}([0,\tau),\R^n)$.
\item[2.] $x_k\to x$ strongly in $W^{m,p}((0,T),\R^n)$ for every $T\in(0,\tau)$.
\end{enumerate}
\end{prop}

\begin{proof}
First of all, let us note that it suffices to prove the equivalence in the case $x=0$. In this case,  
{\em1.}\ is equivalent to say that for every $B\in\cB$ there exists $k_B\in\N$ such that $x_k\in B$ for every $k>k_B$. 
To prove that {\em1.}\ implies {\em2.}\  we fix $T\in(0,\tau)$ and $\varepsilon=\frac{1}{h}$.  
By taking $B=\{z\ :\ \|z\|_T<\frac{1}{h}\}$ we have that there exists $k_h\in\N$ such that $x_k\in B$ for every $k>k_h$. Since this holds for every $h\in\N\setminus\{0\}$ then we have proved that $x_k\to x$ strongly in $W^{m,p}((0,T),\R^n)$ as claimed in {\em2.}\\
Conversely, given any $B\in\cB$, there exists a finite set of times $T_i$, $i=1,...,r$ and $h\in\N$ such that
$$
B=\bigcap_{i=1}^r\big\{z\; :\; \|z\|_{T_i}<\frac{1}{h}\big\}.
$$
By {\em2.}, there exists $k_{h,i}\in\N$ such that 
$$
\|x_k\|_{T_i}<\frac{1}{h}\quad \forall\,i=1,...r,\;\forall k>k_{h,i},
$$
which means that $x_k\in B$ for every $k>k_B:=\max\{k_{h,i}\;\vert\;i=1,\dots, r\,\}$, and thus proves {\em1}.
\end{proof}

\begin{rem}
Of course, condition {\em2.} of Proposition \ref{prop_stchmp}, as well as the boundedness condition of Definition \ref{def_bddlctvs}, needs to be checked just on  an increasing sequence of positive real numbers $T_k\to\tau^-$.  This provides an alternative  way to see that every defining sequence generates the same strongly converging sequences. 
\end{rem}

Let us now restrict our analysis to the case in which $p\in(1,\infty]$. 
We endow the local Sobolev space $\Wl^{m,p}([0,\tau),\R^n)$ with the following notion of weak convergence.

\begin{definition} Let $p\in(1,\infty]$.  Let $(x_k)$ be a sequence in $\Wl^{m,p}([0,\tau),\R^n)$ and $x\in\cD'((0,\tau),\R^n)$. 
We say that 
$(x_k)$ {\em weakly$^\star$ converges to $x$ in $\Wl^{m,p}([0,\tau),\R^n)$}, and we write $x_k\weaks x$, if 
\begin{enumerate}
\item[1.]  $(x_k)$ is bounded in  
$\Wl^{m,p}([0,\tau),\R^n)$, 
\item[2.] $x_k\wto x$ in the weak sense of distributions of $\cD'((0,\tau),\R^n)$.
\end{enumerate}
\end{definition}
 
\noindent 
The choice to use the term ``weak$^\star$'' for every $p\in(1,\infty]$ 
is consistent with the notation adopted in rest of the paper, whenever the non-reflexive case $p=1$ is excluded.      

It is easy to see that also the weak$^\star$ convergence can be characterized in the same way as we have done before for the strong one.

\begin{prop}\label{prop_wchmp}  Let $p\in(1,\infty]$. 
Let $x,x_k\in\Wl^{m,p}([0,\tau),\R^n)$ for every $k\in\N$.
The following propositions are equivalent.
\begin{enumerate}
\item[1.] $x_k\weaks x$   in $\Wl^{m,p}([0,\tau),\R^n)$.
\item[2.] $x_k\weaks x$ in $W^{m,p}((0,T),\R^n)$ for every $T\in(0,\tau)$.
\end{enumerate}
\end{prop}

\begin{proof}
Let us prove that {\em1.}$\implies${\em2.}
Let $T\in(0,\tau)$. By definition of weak$^\star$ convergence we have that $x_k\wto x$ in $\cD'((0,T),\R^n)$ and $(x_k)$ is bounded in $W^{m,p}((0,T),\R^n)$. This implies that the sequence converges weakly$^\star$ to $x$ also in  $W^{m,p}((0,T),\R^n)$. 

Let us prove that {\em2.}$\implies${\em1.} Being weakly$^\star$ converging in $W^{m,p}((0,T),\R^m)$ for every $T\in(0,\tau)$ the sequence $(x_k)$ is bounded in the norm $\|\cdot\|_T$. Hence, it is bounded in $\Wl^{m,p}([0,\tau),\R^n)$. Moreover, for every $\varphi\in \cD'((0,\tau),\R^n)$ there exists  $T\in(0,\tau)$ such that ${\rm supp}(\varphi)\subset(0,T)$ and, hence, 
$\langle x_k,\varphi\rangle\to\langle x,\varphi\rangle$. Thus $x_k\wto x$ in $\cD'((0,\tau),\R^n)$ and, by boundedness, we can conclude that {\em1.}\ holds. 
\end{proof}

\begin{rem} By the definitions above, immediately follows that the inclusion \eqref{eq_inclWloc} is continuous when the spaces are  both endowed with the strong (respectively, weak$^\star$) topology. 
\end{rem}

The following sequential compactness theorem holds.

\begin{theorem}[sequential compactness]\label{th_comp_wloc}  Let $p\in(1,\infty]$. 
Let $(x_k)$ be a bounded sequence in $\Wl^{m,p}([0,\tau),\R^n)$. Then, there exists $x\in\Wl^{m,p}([0,\tau),\R^n)$ and a subsequence $(x_{k_h})$ of $(x_k)$ that weakly$^\star$ converges to $x$  in $\Wl^{m,p}([0,\tau),\R^n)$.  
\end{theorem}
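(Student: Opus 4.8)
The plan is to reduce, via Proposition~\ref{prop_wchmp}, the weak$^\star$ convergence in $\Wl^{m,p}([0,\tau),\R^n)$ to weak$^\star$ convergence on each bounded interval $(0,T)$, and then to produce the limit by a diagonal extraction along a defining sequence. I would fix an increasing defining sequence $T_\ell\to\tau^-$. Since $(x_k)$ is bounded in $\Wl^{m,p}([0,\tau),\R^n)$, for each fixed $\ell$ the restrictions are bounded in $W^{m,p}((0,T_\ell),\R^n)$, and everything then rests on compactness on these fixed intervals.

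First I would establish the fixed-interval compactness: a bounded sequence in $W^{m,p}((0,T),\R^n)$ with $p\in(1,\infty]$ admits a weakly$^\star$ convergent subsequence. For $p\in(1,\infty)$ this is exactly sequential weak compactness in the reflexive Banach space $W^{m,p}$ (Eberlein--\v{S}mulian). For the endpoint case $p=\infty$, reflexivity fails, so I would instead use the isometric embedding $y\mapsto (y,y',\dots,y^{(m)})$ of $W^{m,\infty}((0,T),\R^n)$ into the product $\prod_{j=0}^m L^\infty((0,T),\R^n)$, which is the dual of the separable space $\prod_{j=0}^m L^1((0,T),\R^n)$. The sequential form of the Banach--Alaoglu theorem then yields a subsequence along which each derivative $x_k^{(j)}$ converges weakly$^\star$ in $L^\infty$; one checks that the limits are consistent, i.e.\ that the weak$^\star$ limit of $x_k^{(j)}$ is the $j$-th distributional derivative of the weak$^\star$ limit of $x_k$, using that weak$^\star$ convergence implies convergence in $\cD'$ and that distributional differentiation is continuous for this convergence. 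Hence the limit again lies in $W^{m,\infty}$.

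Next comes the diagonal argument. I would extract from $(x_k)$ a subsequence converging weakly$^\star$ in $W^{m,p}((0,T_1),\R^n)$, then a further subsequence converging weakly$^\star$ in $W^{m,p}((0,T_2),\R^n)$, and so on, finally passing to the diagonal subsequence $(x_{k_h})$. By uniqueness of distributional limits the successive interval limits are consistent (each restricts to the previous one), so they glue to a single distribution $x\in\cD'((0,\tau),\R^n)$ whose restriction to every $(0,T_\ell)$ belongs to $W^{m,p}$; since the condition need only be checked on a defining sequence, this gives $x\in\Wl^{m,p}([0,\tau),\R^n)$. Finally $(x_{k_h})$ is bounded in $\Wl^{m,p}([0,\tau),\R^n)$, being a subsequence of $(x_k)$, and it converges to $x$ in $\cD'((0,\tau),\R^n)$ because every test function has compact support inside some $(0,T_\ell)$; by the definition of weak$^\star$ convergence, equivalently by Proposition~\ref{prop_wchmp}, this is precisely $x_{k_h}\weaks x$ in $\Wl^{m,p}([0,\tau),\R^n)$.

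I expect the main obstacle to be the fixed-interval compactness in the case $p=\infty$: there one cannot invoke reflexivity and must instead realize $W^{m,\infty}$ as sitting inside a dual of a separable Banach space in order to apply the sequential Banach--Alaoglu theorem, and then verify that the several weak$^\star$ limits assemble into a single $W^{m,\infty}$ function. Once this is in place, the diagonal extraction and the gluing of the interval limits are routine, and the translation back to $\Wl^{m,p}$ is immediate from Proposition~\ref{prop_wchmp}.
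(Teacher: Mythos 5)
Your proposal is correct and follows essentially the same route as the paper: extraction of weak$^\star$ convergent subsequences on a defining sequence of intervals $(0,T_j)$, a diagonal subsequence, and gluing of the consistent interval limits into a single distribution that is then identified as the weak$^\star$ limit in $\Wl^{m,p}([0,\tau),\R^n)$. The only difference is that you spell out the fixed-interval compactness separately for $p<\infty$ (reflexivity) and $p=\infty$ (weak$^\star$ sequential compactness in a dual of a separable space), a detail the paper subsumes under a single citation of Alaoglu's theorem.
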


\begin{proof}
By definition,  $(x_k)$ is bounded in $W^{m,p}((0,T),\R^n)$ for every $T\in(0,\tau)$. Let us consider an increasing sequence of positive real numbers $T_j\to\tau^-$.

For $T=T_1$, by Alaoglu's Compactness Theorem, there exists a subsequence $(x_{k_h^1})$ of $(x_k)$ and $x^1\in W^{m,p}((0,T_1),\R^n)$ such that $x_{k_h^1}\weaks x^1$ in $W^{m,p}((0,T_1),\R^n)$. 

For $T=T_2$, there exists a subsequence $(x_{k_h^2})$ of $(x_{k_h^1})$ and $x^2\in W^{m,p}((0,T_2),\R^n)$ such that $x_{k_h^2}\weaks x^2$ in $W^{m,p}((0,T_2),\R^n)$.  Moreover, since the latter weakly$^\star$ converges also in $W^{m,p}((0,T_1),\R^n)$, by uniqueness of the limit we have $x^2_{|(0,T_1)}=x^1$.

By induction, for every $T=T_j\in\N$ ($j\ge2$) there exists a subsequence $(x_{k_h^j})_h$ of $(x_{k_h^{j-1}})_h$ and $x^j\in W^{m,p}((0,T_j),\R^n)$ such that $x_{k_h^j}\weaks x^j$ in $W^{m,p}((0,T_j),\R^n)$ as $h\to\infty$.  Moreover,  by construction
\begin{equation}\label{eq_xh-1}
x^j_{|(0,T_{j-1})}=x^{j-1}.
\end{equation}
Let us now define the map 
$x:\cD((0,\tau),\R^n)\to\R$ by $\langle x,\varphi\rangle:=\langle x^{j_\varphi},\varphi\rangle$ where $j_\varphi$ is any natural such that ${\rm supp}\,\varphi\subset(0,T_{j_\varphi})$. By \eqref{eq_xh-1}, the map $x$ is well defined. Moreover, $x\in\cD'((0,\tau),\R^n)$ and the sequence
$x_h:=x_{k_h^h}$ is a subsequence of $(x_k)$ that weakly$^\star$ converges to $x$. Indeed,  for every $\varphi\in\cD((0,\tau),\R^n)$ there exists $j\in\N$ such that 
${\rm supp}\,\varphi\subset(0,T_j)$. By definition of the map $x$, and 
since $x_{k_h^h}$ is a subsequence of $x_{k_h^j}$ for $h\ge j$, then we have  
$\langle x_h,\varphi\rangle=\langle x_{k_h^h},\varphi\rangle\to \langle x^j,\varphi\rangle=\langle x,\varphi\rangle$. Thus, $x_h\wto x$ in $\cD'$. Since this subsequence is also (trivially) bounded in $\Wl^{m,p}([0,\tau),\R^n)$, then it converges weakly$^\star$ in the latter space and the theorem is proven.
\end{proof}


The following theorem states that the weak$^\star$ convergence in the space $\Wl^{m,p}([0,\tau),\R^n)$ 
satisfies the Urysohn property of convergence structures.

\begin{theorem}[Urysohn    property]\label{th_Ury_Wloc}
 Let $p\in(1,\infty]$. Let $x\in\Wl^{m,p}([0,\tau),\R^n)$ and $(x_k)$ be a bounded sequence in the same space. If every subsequence of $(x_k)$ admits a further subsequence weakly$^\star$  converging to $x$, then the whole sequence $x_k$ weakly$^\star$ converges to $x$.   
\end{theorem}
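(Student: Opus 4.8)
The plan is to argue by contradiction and reduce the statement to the Urysohn property of convergent sequences of real numbers, exploiting that (by definition) weak$^\star$ convergence in $\Wl^{m,p}([0,\tau),\R^n)$ amounts to boundedness together with distributional convergence tested against each fixed test function.

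First I would record the following reduction. Since $(x_k)$ is bounded by hypothesis, the weak$^\star$ convergence $x_k\weaks x$ is equivalent to the single requirement $x_k\wto x$ in $\cD'((0,\tau),\R^n)$, i.e.\ to the convergence of the real numbers $\langle x_k,\varphi\rangle\to\langle x,\varphi\rangle$ for every fixed $\varphi\in\cD((0,\tau),\R^n)$. The same reduction applies verbatim to every subsequence: each is again bounded, so its weak$^\star$ convergence to $x$ is equivalent to testing against each $\varphi$.

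Next, suppose for contradiction that $(x_k)$ does not weakly$^\star$ converge to $x$. By the reduction above there must exist a test function $\varphi\in\cD((0,\tau),\R^n)$ for which the real sequence $\langle x_k,\varphi\rangle$ does not converge to $\langle x,\varphi\rangle$. By the very meaning of non-convergence in $\R$, there are $\varepsilon>0$ and a subsequence $(x_{k_j})$ with $|\langle x_{k_j},\varphi\rangle-\langle x,\varphi\rangle|\ge\varepsilon$ for all $j$. I would then apply the hypothesis to this subsequence: it admits a further subsequence $(x_{k_{j_\ell}})$ weakly$^\star$ converging to $x$, whence $\langle x_{k_{j_\ell}},\varphi\rangle\to\langle x,\varphi\rangle$. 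This contradicts the lower bound $|\langle x_{k_{j_\ell}},\varphi\rangle-\langle x,\varphi\rangle|\ge\varepsilon$, and the contradiction forces $x_k\weaks x$.

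The argument is essentially routine, so I do not expect a genuine obstacle; the only point deserving care is the opening reduction. Once boundedness is granted as a standing hypothesis, weak$^\star$ convergence collapses to the convergence of the scalar pairings $\langle\,\cdot\,,\varphi\rangle$, so that the possibly non-metrizable, genuinely topological nature of the weak$^\star$ structure on $\Wl^{m,p}([0,\tau),\R^n)$ plays no role, and the Urysohn property is simply inherited from that of $\R$.
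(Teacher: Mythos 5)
Your proof is correct and follows essentially the same route as the paper: argue by contradiction, extract a subsequence whose pairings with a fixed test function $\varphi$ stay $\varepsilon$-away from $\langle x,\varphi\rangle$, and contradict the hypothesis that this subsequence has a further subsequence weakly$^\star$ converging to $x$. Your explicit opening reduction (boundedness plus distributional convergence equals weak$^\star$ convergence) is just the definition the paper uses implicitly, and if anything your final step is stated slightly more carefully than the paper's.
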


\begin{proof}
By contradiction, suppose that $x_k\not\wto x$ in $\cD'((0,\tau),\R^n)$. Then,  there exists $\varphi\in\cD'((0,\tau),\R^n)$ and $\varepsilon>0$ such that 
$$
\forall\,h\in\N\ \exists\,k_h>h\; :\; |
\langle x_{k_h},\varphi\rangle- \langle x,\varphi\rangle|\ge\varepsilon.
$$
Then, the sequence $(x_{k_h})$ does not admit any  weakly$^\star$ converging subsequence, against the assumption of the theorem.  
\end{proof}

Let us remark that (as it is  well known) on the bounded interval  $(0,T)$ the inclusion
$$
W^{1,1}(0,T)\subset L^\infty(0,T)
$$
is continuous but not compact. As a consequence, 
the inclusion
$$
W^{1,1}_\loc([0,\infty);\R^n)\subset L^\infty_\loc([0,\infty);\R^n)
$$
inherits the same properties. 
Moreover, since $L^\infty_\loc([0,\infty);\R^n)$ is an LF-space, the
topology of strong convergence  is metrizable. Hence, the space $W^{1,1}_\loc([0,\infty);\R^n)$,  as a topological subspace of $L^\infty_\loc([0,\infty);\R^n)$ with the strong topology, is a metric space.


On the contrary, when the summability exponent $p$ is strictly larger than $1$, the immersion turns out to be compact, as the following lemma states.

\begin{lemma}\label{lem_ucbs}
 Let $p\in(1,\infty]$.  The space $\Wl^{1,p}([0,\tau),\R^n)$ is compactly embedded in $L^\infty_\loc([0,\tau),\R^n)$. 
In particular, weakly$^\star$ converging sequences in $\Wl^{1,p}([0,\tau),\R^n)$  are strongly converging in $L^\infty_\loc([0,\tau),\R^n)$, that is, uniformly converging on the compact subsets of $[0,\tau)$. 
\end{lemma}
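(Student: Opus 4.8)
The plan is to reduce the whole statement to a single bounded interval and then invoke the classical one‑dimensional compact Sobolev embedding, transferring the result to the local spaces through the characterizations already at our disposal. Note first that, by Proposition~\ref{prop_stchmp}, strong convergence in $L^\infty_\loc([0,\tau),\R^n)$ is nothing but uniform convergence on each $[0,T]$ with $T\in(0,\tau)$, while by Proposition~\ref{prop_wchmp} weak$^\star$ convergence in $\Wl^{1,p}([0,\tau),\R^n)$ is tested interval by interval as well. Hence both hypotheses and conclusions are local in $T$, and it suffices to establish, for each fixed $T\in(0,\tau)$, that the embedding $W^{1,p}((0,T),\R^n)\hookrightarrow C([0,T],\R^n)=L^\infty((0,T),\R^n)$ is compact.

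For the bounded interval I would argue by Arzel\`a--Ascoli. Every $x\in W^{1,p}((0,T),\R^n)$ admits an absolutely continuous representative with $x(s)-x(t)=\int_t^s x'(r)\,dr$, so H\"older's inequality yields the modulus-of-continuity estimate $|x(s)-x(t)|\le \|x'\|_p\,|s-t|^{1/p'}$, where the exponent $1/p'$ is \emph{strictly positive} precisely because $p>1$ (for $p=\infty$ this is the Lipschitz bound with $1/p'=1$). A set bounded in $W^{1,p}((0,T),\R^n)$ is therefore uniformly bounded, via the continuous embedding $W^{1,p}\hookrightarrow L^\infty$ on the bounded interval, and equicontinuous by the displayed estimate, so Arzel\`a--Ascoli makes it relatively compact in $C([0,T],\R^n)$. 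This is the analytic core of the lemma.

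It then remains to transfer this to the local spaces. For the compact embedding itself, given a bounded sequence in $\Wl^{1,p}([0,\tau),\R^n)$ I would fix a defining sequence $T_j\to\tau^-$ and run exactly the diagonal extraction used in the proof of Theorem~\ref{th_comp_wloc}, producing a subsequence that converges uniformly on each $[0,T_j]$, hence strongly in $L^\infty_\loc([0,\tau),\R^n)$ by Proposition~\ref{prop_stchmp}. For the ``in particular'' assertion, let $x_k\weaks x$ in $\Wl^{1,p}([0,\tau),\R^n)$; the sequence is bounded, hence relatively compact in $L^\infty_\loc([0,\tau),\R^n)$ by what was just shown. Any strongly convergent subsequence has a limit which coincides with $x$, since strong $L^\infty_\loc$ convergence implies convergence in $\cD'((0,\tau),\R^n)$ and $x_k\wto x$ there by definition of weak$^\star$ convergence, so uniqueness of the distributional limit applies. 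As $L^\infty_\loc([0,\tau),\R^n)$ is metrizable, the Urysohn property (Theorem~\ref{th_Ury_Wloc}) upgrades this to strong $L^\infty_\loc$ convergence of the whole sequence $x_k$ to $x$.

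The hard part is the bounded-interval step, and more precisely the equicontinuity estimate: its uniform modulus of continuity rests entirely on the positivity of $1/p'$, which is exactly the feature that fails at $p=1$ and is responsible for the loss of compactness noted just before the lemma. Everything else---the interval-by-interval reduction, the diagonal extraction, and the passage from subsequential to full convergence---is routine bookkeeping built on the propositions already proved.
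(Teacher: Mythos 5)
Your proof is correct and follows essentially the same route as the paper, which simply invokes the well-known compactness of $W^{1,p}((0,T),\R^n)\hookrightarrow L^\infty((0,T),\R^n)$ for $p\in(1,\infty]$ together with the interval-by-interval characterizations of the strong and weak$^\star$ convergences; you merely spell out the Arzel\`a--Ascoli argument and the diagonal extraction that the paper leaves implicit. One cosmetic point: in the final step what you actually use is the Urysohn property of the metrizable strong topology of $L^\infty_\loc$, not Theorem~\ref{th_Ury_Wloc} (which concerns weak$^\star$ convergence in $\Wl^{m,p}$), so that citation should be adjusted.
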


\begin{proof}
The result follows by the well-known compactness of the immersion of $W^{1,p}((0,T),\R^n)$ in $L^\infty((0,T),\R^n)$ for $p\in(1,\infty]$,   and by the characterization of the strong and weak$^\star$ convergences given before. 
\end{proof}

\noindent
{\bf Acknowledgements.}  
 The authors are members of GNAMPA--INdAM.

\

\bibliographystyle{plain}
\bibliography{bibnol}

\begin{thebibliography}{10}

\bibitem{Adams}
R.A. Adams.
\newblock {\em Sobolev spaces}, volume Vol. 65 of {\em Pure and Applied Mathematics}.
\newblock Academic Press [Harcourt Brace Jovanovich, Publishers], New York-London, 1975.

\bibitem{AgrLib01}
A.A. Agrachev and D.~Liberzon.
\newblock Lie-algebraic stability criteria for switched systems.
\newblock {\em SIAM J. Control Optim.}, 40(1):253--269, 2001.

\bibitem{AS1976}
A.~Ambrosetti and C.~Sbordone.
\newblock {$\Gamma \sp{-}$}-convergenza e {$G$}-convergenza per problemi non lineari di tipo ellittico.
\newblock {\em Boll. Un. Mat. Ital. A (5)}, 13(2):352--362, 1976.

\bibitem{anderson1992infectious}
R.M. Anderson and R.M. May.
\newblock {\em Infectious Diseases of Humans: Dynamics and Control}.
\newblock Oxford University Press, 1992.

\bibitem{AseeKry04}
S.M. Aseev and A.V. Kryazhimskiy.
\newblock The {P}ontryagin maximum principle and transversality conditions for a class of optimal control problems with infinite time horizons.
\newblock {\em SIAM J. Control Optim.}, 43(3):1094--1119, 2004.

\bibitem{AFG22cor}
F.~Avram, L.~Freddi, and D.~Goreac.
\newblock Corrigendum to ``{O}ptimal control of a {SIR} epidemic with {ICU} constraints and target objectives''.
\newblock {\em Appl. Math. Comput.}, 423:127012, 2, 2022.

\bibitem{AvrFre22}
F.~Avram, L.~Freddi, and D.~Goreac.
\newblock Optimal control of a {SIR} epidemic with {ICU} constraints and target objectives.
\newblock {\em Appl. Math. Comput.}, 418:126816, 2022.

\bibitem{Bacc98}
A.~Bacciotti.
\newblock External stabilizability of nonlinear systems with some applications.
\newblock {\em Internat. J. Robust Nonlinear Control}, 8(1):1--10, 1998.

\bibitem{BasCassFra18}
V.~Basco, P.~Cannarsa, and H.~Frankowska.
\newblock Necessary conditions for infinite horizon optimal control problems with state constraints.
\newblock {\em Math. Control Relat. Fields}, 8(3-4):535--555, 2018.

\bibitem{Behncke}
H.~Behncke.
\newblock Optimal control of deterministic epidemics.
\newblock {\em Optim. Control Appl. Methods}, 21(6):269--285, 2000.

\bibitem{BelButFre93}
M.~Belloni, G.~Buttazzo, and L.~Freddi.
\newblock Completion by gamma-convergence for optimal control problems.
\newblock {\em Ann. Fac. Sci. Toulouse Math. (6)}, 2(2):149--162, 1993.

\bibitem{BhatiaSzego}
N.P. Bhatia and G.P. Szeg\"o.
\newblock {\em Stability Theory of Dynamical Systems}.
\newblock Classics in Mathematics. Springer Berlin, 1970.

\bibitem{Bianchini}
S.~Bianchini.
\newblock Topological vector spaces.
\newblock Sissa ``Functional Analysis'' lecture notes, 2013.
\newblock link: https://people.sissa.it/~bianchin/Courses/Functionanal/lecture03.topvectspaces.pdf.

\bibitem{Braides02}
A.~Braides.
\newblock {\em $\Gamma$-Convergence for Beginners}.
\newblock Oxford University Press, 2002.

\bibitem{BressanPiccoli}
A.~Bressan and B.~Piccoli.
\newblock {\em Introduction to the Mathematical Theory of Control}, volume~1.
\newblock American Institute of Mathematical Sciences Springfield, 2007.

\bibitem{Brezis2011}
H.~Brezis.
\newblock {\em Functional Analysis, Sobolev Spaces and Partial Differential Equations}.
\newblock Universitext. Springer, 2011.

\bibitem{Buttazzo89}
G.~Buttazzo.
\newblock {\em Semicontinuity, Relaxation and Integral Representation in the Calculus of Variations}, volume 207 of {\em Pitman Research Notes in Mathematics Series}.
\newblock Longman Scientific \& Technical, Harlow; copublished in the US with John Wiley \& Sons, Inc., New York, 1989.

\bibitem{BF98}
G.~Buttazzo.
\newblock Relaxed optimal control problems and applications to shape optimization.
\newblock In {\em Nonlinear analysis, differential equations and control ({M}ontreal, {QC}, 1998)}, volume 528 of {\em NATO Sci. Ser. C Math. Phys. Sci.}, pages 159--206. Kluwer Acad. Publ., Dordrecht, 1999.
\newblock Notes by Lorenzo Freddi.

\bibitem{BC89}
G.~Buttazzo and E.~Cavazzuti.
\newblock Limit problems in optimal control theory.
\newblock {\em Ann. Inst. H. Poincar\'e{} C Anal. Non Lin\'eaire}, 6:151--160, 1989.

\bibitem{ButtDalMas82}
G.~Buttazzo and G.~Dal~Maso.
\newblock {$\Gamma $}-convergence and optimal control problems.
\newblock {\em J. Optim. Theory Appl.}, 38(3):385--407, 1982.

\bibitem{CANNFra18}
P.~Cannarsa and H.~Frankowska.
\newblock Value function, relaxation, and transversality conditions in infinite horizon optimal control.
\newblock {\em J. Math. Anal. Appl.}, 457(2):1188--1217, 2018.

\bibitem{ChitMasSig25}
Y.~Chitour, P.~Mason, and M.~Sigalotti.
\newblock {\em Dynamics and Stability of Continuous-Time Switched Linear Systems}.
\newblock Springer Nature Switzerland, 2025.

\bibitem{DalMaso93}
G.~Dal~Maso.
\newblock {\em An introduction to {$\Gamma$}-convergence}, volume~8 of {\em Progress in Nonlinear Differential Equations and their Applications}.
\newblock Birkh\"auser Boston, Inc., Boston, MA, 1993.

\bibitem{DellaRossaFreddi24}
M.~{Della Rossa}, L.~Freddi, and D.~Goreac.
\newblock Optimality of vaccination for an {SIR} epidemic with an {ICU} constraint.
\newblock {\em J. Optim. Theory Appl.}, 204(1):Paper No. 8, 35, 2025.

\bibitem{SFT2022}
S.~Federico, G.~Ferrari, and M.-L. Torrente.
\newblock Optimal vaccination in a {SIRS} epidemic model.
\newblock {\em Econom. Theory}, 77(1-2):49--74, 2024.

\bibitem{FL07}
I.~Fonseca and G.~Leoni.
\newblock {\em Modern Methods in the Calculus of Variations: {$L^p$} Spaces}.
\newblock Springer Monographs in Mathematics. Springer, New York, 2007.

\bibitem{Frankowska2010}
H.~Frankowska.
\newblock Optimal control under state constraints.
\newblock In {\em Proceedings of the International Congress of Mathematicians 2010 (ICM 2010)}, pages 2915--2942. World Scientific, 2010.

\bibitem{F2000}
L.~Freddi.
\newblock Optimal control problems with weakly converging input operators in a nonreflexive framework.
\newblock {\em Portugal. Math.}, 57(1):97--126, 2000.

\bibitem{FreddiGoreac23}
L.~Freddi and D.~Goreac.
\newblock Infinite horizon optimal control of a {SIR} epidemic under an {ICU} constraint.
\newblock {\em J. Convex Anal.}, 31(2):525--562, 2024.

\bibitem{Hale}
J.~K. Hale.
\newblock {\em Ordinary Differential Equations}.
\newblock Dover Publications, 1997.

\bibitem{Halkin1974}
H.~Halkin.
\newblock Necessary conditions for optimal control problems with infinite horizons.
\newblock {\em Econometrica}, 42(2):267--272, 1974.

\bibitem{hansen2011optimal}
E.~Hansen and T.~Day.
\newblock Optimal control of epidemics with limited resources.
\newblock {\em J. Math. Biol.}, 62(3):423--451, 2011.

\bibitem{kermack1927contribution}
W.O. Kermack and A.G. McKendrick.
\newblock A contribution to the mathematical theory of epidemics.
\newblock {\em Proc. R. Soc. Lond. A}, 115(772):700--721, 1927.

\bibitem{Khalil02}
H.~K. Khalil.
\newblock {\em Nonlinear Systems}.
\newblock Pearson Education. Prentice Hall, 2002.

\bibitem{Lib03}
D.~Liberzon.
\newblock {\em Switching in Systems and Control}.
\newblock Systems \& Control: Foundations \& Applications. Birkh{\"a}user, 2003.

\bibitem{Liberzon12}
D.~Liberzon.
\newblock {\em Calculus of Variations and Optimal Control Theory: A Concise Introduction}.
\newblock Princeton, NJ, USA: Princeton University Press, 2012.

\bibitem{Miro23}
A.~Mironchenko.
\newblock {\em Input-to-State Stability. Theory and Applications}.
\newblock Communications and Control Engineering. Springer, 2023.

\bibitem{Sont89}
E.~D. Sontag.
\newblock Smooth stabilization implies coprime factorization.
\newblock {\em IEEE Trans. Automat. Control}, 34(4):435--443, 1989.

\bibitem{Sontag94}
E.~D. Sontag and Y.~Wang.
\newblock On characterizations of the input-to-state stability property.
\newblock {\em Systems Control Lett.}, 24(5):351--359, 1995.

\bibitem{SontagWang96}
E.D. Sontag and Y~Wang.
\newblock New characterizations of input-to-state stability.
\newblock {\em IEEE Transactions on Automatic Control}, 41(9):1283--1294, 1996.

\bibitem{Teschl12}
G.~Teschl.
\newblock {\em Ordinary Differential Equations and Dynamical Systems}.
\newblock Graduate studies in Mathematics. American Mathematical Soc., 2012.

\bibitem{Treves}
F~Tr\`eves.
\newblock {\em Topological Vector Spaces, Distributions and Kernels}.
\newblock Academic Press, New York-London, 1967.

\bibitem{Vinter2010Book}
R.~B. Vinter.
\newblock {\em Optimal Control}.
\newblock Modern Birkh{\"a}user Classics. Birkh{\"a}user Boston, 2010.

\bibitem{ZiZongJin10}
Y.~Zi-Zong and G.~Jin-Hai.
\newblock Some equivalent results with {Y}akubovich's {$S$}-lemma.
\newblock {\em SIAM J. Control Optim.}, 48(7):4474--4480, 2010.

\end{thebibliography}

\end{document}